\documentclass[11pt]{article}

\usepackage{amsmath,amsthm,amssymb}

\usepackage{hyperref}

\usepackage[utf8]{inputenc}


\usepackage{gensymb}

\usepackage{tipa}

\usepackage{thmtools}
\usepackage{thm-restate}

\theoremstyle{plain}
\newtheorem{theorem}{Theorem}
\newtheorem{proposition}{Proposition}
\newtheorem{lemma}[theorem]{Lemma}

\theoremstyle{definition}
\newtheorem*{definition}{Definition} 

\theoremstyle{remark}

\theoremstyle{question}
\newtheorem{question}{Question}

\usepackage[margin=1in]{geometry}

\usepackage{verbatim}


\newcommand{\sz}[2]{S_{\mathfrak{z}}(#1,#2)}
\newcommand{\szt}[2]{S_{\mathfrak{z},2}(#1,#2)}
\newcommand{\nequiv}{\not\equiv}
\newcommand{\lamba}{\lambda}
\newcommand{\lamda}{\lambda}

\newcommand{\ssq}{$\star$-sequence }

\begin{document}

\title{Upper and Lower Bounds on Zero-Sum Generalized Schur Numbers}
\author{Erik Metz}
\maketitle

\begin{abstract}
Let $\sz{k}{r}$ be the least positive integer such that for any $r$-coloring $\chi : \{1,2,\dots,\sz{k}{r}\} \longrightarrow \{1, 2, \dots, r\}$, there is a sequence $x_1, x_2, \dots, x_k$ such that $\sum_{i=1}^{k-1} x_i = x_k$, and $\sum_{i=1}^{k} \chi(x_i) \equiv 0 \pmod{r}$. We show that when $k$ is greater than $r$, $kr - r - 1 \le \sz{k}{r} \le kr - 1$, and when $r$ is an odd prime, $\sz{k}{r}$ is in fact equal to $kr - r$. 
\end{abstract}

\section{Introduction}
The generalized Schur numbers $S(k,r)$ are an object in Ramsey theory defined to be the least positive integer such that any $r$-coloring of $\{1,2,\dots,\sz{k}{r}\}$ admits a monochromatic solution to $\sum_{i=1}^{k-1} x_i = x_k$. In 1916, Schur proved that $\frac{1}{2}(3^n - 1) \le S(3,n) \le R(n) - 2$, where $R(n)$ is the $n$th diagonal Ramsey number~\cite{schur}. 
The lower bound has since been improved to $S(3,n) > c(3.17176)^n$ for some positive constant $c$ by Exoo in 1994~\cite{exoo}. However, since the best upper bounds on diagonal Ramsey numbers are still $\omega((4-\epsilon)^{n})$, there is a very large gap between the lower and upper bounds on Schur numbers.

In 2018 Robertson 
 introduced the zero-sum generalized Schur numbers, which relax the monochromatic condition to a zero-sum condition:
 
 \begin{definition}[\cite{robertson}]
 We call a sequence $x_1, x_2, \dots, x_k$  \emph{$r$-zero-sum} if $\sum_{i=1}^{k} x_i \equiv 0 \pmod{r}$. 
 \end{definition}
 
 A fundamental result in the study of zero-sum sequences is the 1961 Erd\H{o}s-Ginzberg-Ziv theorem, which states that in any set of $2n - 1$ integers, there is a subset of size $n$ which forms an $n$-zero-sum sequence~\cite{egz}. Since then, zero-sum problems have been a fruitful area of research in both additive number theory and Ramsey theory (see~\cite{zerosum} for many examples). More recently, authors have been studying zero-sum sequence problems with more rigid structures imposed upon the sequence (\cite{robertson}, \cite{robertsonvdw}, \cite{brown}). In this paper we will impose the same structure as in the generalized Schur numbers, that the sum of the first $k-1$ terms of the sequence is equal to the final term.
 
 \begin{definition}
 We denote by $\mathcal{E}$ the equation $\sum_{i=1}^{k-1} x_i = x_k$.
 \end{definition}
 
 \begin{definition}
 The \emph{zero-sum generalized Schur number} $\sz{k}{r}$ is the least positive integer such that for any $r$-coloring $\chi : \{1,2,\dots,\sz{k}{r}\} \longrightarrow \{1, 2, \dots, r\}$, there is a solution $x_1, x_2, \dots, x_k$ to $\mathcal{E}$ such that $\chi(x_1), \chi(x_2), \dots, \chi(x_k)$ is an $r$-zero-sum sequence.
 \end{definition}
 
 Note that if $r \nmid k$, then $\sz{k}{r} = \infty$, since there will be no $r$-zero-sum solutions to $\mathcal{E}$ if we color everything with $1$. If $r \mid k$, then any monochromatic solution to $\mathcal{E}$ is automatically $r$-zero-sum, so $\sz{k}{r} \le S(k,r)$.

In 2018, Robertson proved that $\sz{k,2} = 2k - 3$, and also discovered lower bounds on $\sz{k}{r}$ when $r = 2$ or $3$~\cite{robertson}. 
In particularly, Robertson proved the following theorems.
\begin{theorem}[\cite{robertson}, Theorem 4]
Let $k \in \mathbb{Z}^+$ with $3 \mid k$. Then $\sz{k}{3} \ge 3k - 3$. 
\end{theorem}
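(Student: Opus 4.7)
The plan is to prove the lower bound $\sz{k}{3} \geq 3k-3$ by exhibiting an explicit $3$-coloring $\chi$ of $\{1, 2, \ldots, 3k-4\}$ for which no solution to $\mathcal{E}$ is $3$-zero-sum.

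First I would pin down the structure of potential solutions. Since $x_k = \sum_{i=1}^{k-1} x_i \leq 3k-4$, the average of $x_1, \ldots, x_{k-1}$ is strictly less than $3 + 1/(k-1)$, so most of them are small. Splitting the range into blocks $I_1 = [1, k-2]$, $I_2 = [k-1, 2k-3]$, $I_3 = [2k-2, 3k-4]$, I would verify that at most two of $x_1, \ldots, x_{k-1}$ can lie in $I_2$ and at most one in $I_3$; moreover, the latter case forces the solution to be essentially unique (one $x_i = 2k-2$, the remaining equal to $1$, and $x_k = 3k-4$). This yields a short finite list of configuration types, indexed by $(b_1, b_2, b_3)$ together with the block containing $x_k$.

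Next, I would define $\chi$. A purely block-constant coloring $\chi|_{I_j} = c_j$ does not work: the monochromatic solutions $(1,\ldots,1)$ and $(2,\ldots,2)$ force $c_1 \neq c_2$ and $c_1 \neq c_3$, while the configuration $(k-3, 2, 1)$ contributes $-c_2 + c_3 \not\equiv 0 \pmod 3$; so $\{c_1, c_2, c_3\} = \{0,1,2\}$. But the mixed configuration $(k-2, 1, 1)$ yields sum $\equiv c_1 + c_2 + c_3 \pmod 3$ (using $k \equiv 0$), which is then $0$, a contradiction. Accordingly I would modify the block coloring by recoloring a small set of boundary elements — natural candidates are $k-1$, $2k-3$, $2k-2$, $3k-5$, and $3k-4$ — chosen so that the previously problematic configurations become nonzero-sum without breaking the rest. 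The final step is a case-by-case verification: for the chosen coloring, compute $\sum_i \chi(x_i) \pmod 3$ for each enumerated configuration type, repeatedly using $3 \mid k$ to annihilate terms of the form $k c$.

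The main obstacle is the construction. The mod-$3$ constraints from different configuration types interact tightly, and any natural symmetric coloring fails for the reasons above, so the modification must be selected carefully, likely through small case analysis after the enumeration of configuration types is complete. Once the coloring is pinned down, the verification is routine arithmetic.
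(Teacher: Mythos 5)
This statement is quoted from Robertson's paper; the present paper's own route to it is the $r=3$ case of Theorem~\ref{thm:lodd}, whose coloring assigns to $m$ a residue of the form $m+2j \pmod 3$ with the allowed offsets $j\in\{0,\dots,\alpha-1\}$ growing with the block index $\alpha$ of $m$. Your proposal never actually produces a coloring, and the family you restrict yourself to --- constant on each of $I_1,I_2,I_3$ except for a bounded set of boundary elements, none interior to $I_1$ --- provably contains no valid example, so the ``careful case analysis'' you defer cannot succeed. Concretely: since $I_1=[1,k-2]$ is monochromatic of color $c_1$, solutions with all of $x_1,\dots,x_{k-1}$ in $I_1$ realize every $x_k=m\in[k-1,3k-4]$ and force $\chi(m)\nequiv c_1$; solutions with one $x_i=k$ (untouched by your patch, so $\chi(k)=c_2$) and the rest in $I_1$ force $\chi(m)\nequiv -c_1-c_2$ for all $m\in[2k-2,3k-4]$; since $c_1$, $-c_1-c_2$, $c_2$ are pairwise distinct mod $3$ once $c_1\ne c_2$, this pins $\chi(m)\equiv c_2$ on all of $[2k-2,3k-4]$, including your recolored elements $2k-2$, $3k-5$, $3k-4$. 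Repeating with $x_i=k-1$ then forces $\chi(k-1)\equiv c_2$ as well, and the solution $(k-1)+(k-1)+(k-3)\cdot 1=3k-5$ has color sum $(k-3)c_1+2c_2+\chi(3k-5)\equiv 3c_2\equiv 0\pmod 3$. The failure of block-constant colorings is thus not a boundary phenomenon, and no $O(1)$ patch (even one touching $I_1$, since the configurations above can avoid any bounded set) can repair it.

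The missing idea is that the coloring must vary \emph{within} the blocks: in the paper's construction $\chi(m)\equiv m\pmod 3$ on the first block, and more generally $\chi(m)-m$ is constrained to a set of even offsets of size equal to the block index of $m$. Along any solution this gives $\sum_{i<k}\chi(x_i)\equiv x_k+2\gamma$ with $0\le\gamma\le\sum_\alpha(\alpha-1)b_\alpha$, while $x_k$ is forced into a block high enough that precisely the residues $-x_k-2\gamma'$ with $\gamma'$ in the same range are forbidden for $\chi(x_k)$. Your bookkeeping by $(b_1,b_2,b_3)$ is essentially the paper's $b_\alpha$ decomposition and is the right skeleton, but it must be paired with a coloring whose value tracks $m$ itself rather than the block containing $m$.
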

\begin{theorem}[\cite{robertson}, Theorem 6]
Let $k \in \mathbb{Z}^+$ with $4 \mid k$. Then $\sz{k}{4} \ge 4k - 5$.
\end{theorem}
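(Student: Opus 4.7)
The plan is to exhibit an explicit $4$-coloring $\chi \colon \{1, \ldots, 4k-6\} \to \{1,2,3,4\}$ under which no solution to $\mathcal{E}$ is $4$-zero-sum; this witnesses that $\sz{k}{4} > 4k-6$ and yields the desired bound. Following the model of the $r = 2$ case (in which splitting $\{1,\ldots,2k-4\}$ at $k-2$ into two differently colored intervals suffices), I would look for a block coloring of $\{1,\ldots,4k-6\}$ by a small number of intervals, with colors chosen so that the color sum of every solution lies in a nonzero residue class modulo $4$.

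The feasibility of such a construction is driven by a ``small average'' observation: every solution satisfies $\sum_{i<k} x_i = x_k \leq 4k-6$, so the average of $x_1,\ldots,x_{k-1}$ is less than $4 + O(1/k)$, and only a bounded number of the $x_i$ with $i<k$ can exceed any fixed threshold. Concretely, the proof proceeds in three steps: (i) define $\chi$ as constant on a small number of intervals; (ii) parameterize each solution by a tuple $(t_1,t_2,\ldots)$ recording how many of $x_1,\ldots,x_{k-1}$ lie in each interval, along with the interval containing $x_k$, and enumerate the (few) feasible types allowed by $\sum x_i \leq 4k-6$; (iii) use $k \equiv 0 \pmod 4$ to verify that for each feasible type the color sum is $\not\equiv 0 \pmod 4$.

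The main obstacle is choosing the coloring so that step (iii) holds for every type simultaneously. A two-block construction with boundary $M = k-2$ and colors $a,b$ already fails: since $x_k \geq k-1$ always lies in the large block, the color sum reduces to $(t+1)(b-a) \pmod 4$ where $t$ counts the $x_i$ ($i<k$) in the large block, and the case $t=3$, realized by a decomposition such as $(1,\ldots,1,k-1,k-1,k-1) \to x_k = 4k-7$, gives $4(b-a) \equiv 0 \pmod 4$ regardless of the chosen colors. Three or more blocks are therefore required, and the remaining task is to select boundaries and colors so that no feasible combination of block counts kills the color sum modulo $4$. I expect this to reduce to a short but delicate case analysis, most of which is pinned down by the most constrained types near the top of the interval $\{1,\ldots,4k-6\}$.
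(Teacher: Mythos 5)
Your overall strategy---exhibit a $4$-coloring of $\{1,\dots,4k-6\}$ with no $4$-zero-sum solution to $\mathcal{E}$, and control solutions via the observation that only boundedly many of $x_1,\dots,x_{k-1}$ can be large---is the right one (this statement is the $r=4$ case of Theorem~\ref{thm:leven}). However, the family of colorings you restrict to, those constant on a small number of intervals, contains no valid witness, so step (iii) of your plan cannot be completed for any choice of boundaries and colors. The obstruction appears already from the requirement that $1,2,3,4$ share a color. Suppose $\chi(1)=\chi(2)=\chi(3)=\chi(4)$; since $4\mid k$, translating all colors by a constant changes every $k$-term color sum by a multiple of $4$, so we may assume the common value is $0$. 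Writing any $m$ with $k-1\le m\le 4k-6$ as a sum of $k-1$ parts from $\{1,2,3,4\}$ forces $\chi(m)\nequiv 0\pmod 4$; in particular $c:=\chi(k-1)\nequiv 0$. Writing $4k-6$ as $j$ copies of $k-1$ plus $k-1-j$ parts from $\{1,2,3,4\}$ (feasible for $j=0,1,2,3$ when $k\ge 8$) forces $\chi(4k-6)\nequiv -jc$, so $c$ must be even, i.e.\ $c\equiv 2$, and $\chi(4k-6)$ is odd. The $j=1$ decompositions applied to general $m$ give $\chi(m)\nequiv 2$ for all $m\ge 2k-3$, so $\chi(2k-3)$ is odd as well. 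Finally, writing $4k-6$ as $(2k-3)$ plus $k-2$ small parts, and as $(2k-3)+(k-1)$ plus $k-3$ small parts, forces $\chi(4k-6)\nequiv-\chi(2k-3)$ and $\chi(4k-6)\nequiv-\chi(2k-3)-2$; since $\chi(2k-3)$ is odd these two excluded values are exactly the two odd residues, a contradiction. So no coloring that keeps $\{1,2,3,4\}$ monochromatic can work, and in fact the paper's upper-bound argument for $r=4$ shows, using only integers up to $4k-6$, that a valid coloring must assign $1,2,3,4$ four distinct colors.

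What the paper does instead is an \emph{arithmetic} coloring rather than a block-constant one: $\chi(m)\equiv m+\epsilon(m)\pmod 4$, where the defect $\epsilon(m)$ is required to lie in $\{0,\dots,\alpha-1\}$ for $m\le\alpha k-\alpha$ and, dually, $\chi(m)$ must avoid $\{-m,-m-1,\dots,-m-(\alpha-1)\}$ for $m\ge\alpha k-\alpha$. Your ``small average'' observation is then exactly the engine of the verification: if $b_\alpha$ of the parts lie in the $\alpha$-th band, the parts' color sum is $x_k+\gamma$ with $0\le\gamma\le\sum_\alpha(\alpha-1)b_\alpha$, while $x_k\ge\bigl(1+\sum_\alpha(\alpha-1)b_\alpha\bigr)(k-1)$ places $x_k$ deep enough in the bands that precisely the values $-x_k-\gamma'$ for $\gamma'$ in that same range are forbidden for $\chi(x_k)$. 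To salvage your plan, replace ``constant on few intervals'' by ``congruent to $m$ plus a small band-dependent defect''; note that such a coloring has $\Theta(k)$ maximal constant intervals, not $O(1)$.
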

Robertson also provided a proposition in the case where $k = r$.
\begin{proposition}[\cite{robertson}, Proposition 7]
Let $k$ be an odd positive integer. Then $\sz{k}{k} \ge 2(k^2  - k - 1)$.
\end{proposition}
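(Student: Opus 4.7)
The plan is to witness the lower bound by exhibiting an explicit $k$-coloring $\chi$ of $\{1, 2, \ldots, 2k^2 - 2k - 3\}$ under which no solution to $\mathcal{E}$ is $k$-zero-sum. The shape of the bound is suggestive: by the theorem of Beutelspacher and Brestovansky, $S(k, 2) = k^2 - k - 1$, so there is a $2$-coloring $\chi_0 : \{1, \ldots, k^2 - k - 2\} \to \{A, B\}$ with no monochromatic solution to $\mathcal{E}$. My goal is to lift $\chi_0$ to a $k$-coloring on the doubled range $\{1, \ldots, 2k^2 - 2k - 3\}$, using the extra colors and the oddness of $k$ to upgrade ``monochromatic-avoidance'' into ``zero-sum-avoidance.''

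The construction I would start with is parity-based: reserve color $1$ for every odd element of the domain, and assign colors from $\{2, \ldots, k\}$ to each even element $x = 2y$ according to the B--B block containing $y$, refining the partition into as many sub-blocks as necessary. For $k = 3$ this produces the palindromic pattern $\chi = 1, 2, 1, 3, 1, 3, 1, 2, 1$ on $\{1, \ldots, 9\}$, which a direct case-check shows has no $3$-zero-sum solution to $\mathcal{E}$.

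The verification then rests on a parity observation: since $k$ is odd, the equation $x_1 + \cdots + x_{k-1} = x_k$ forces the number of odd $x_i$'s among $\{x_1, \ldots, x_k\}$ to be even, so the number $e$ of even $x_i$'s is always odd. Because the $k - e$ odd coordinates each contribute color $1$, the zero-sum condition reduces to
$$\sum_{i:\, x_i\text{ even}} \chi(x_i) \equiv e \pmod k.$$
I would then case-split on $e \in \{1, 3, \ldots, k\}$. The extreme cases are immediate: $e = 1$ is impossible because a single color in $\{2, \ldots, k\}$ is never $\equiv 1 \pmod k$, and $e = k$ (all $x_i$ even) reduces upon halving to an $\mathcal{E}$-solution in $\{1, \ldots, k^2 - k - 2\}$ whose $\chi_0$-colors must all coincide, contradicting the B--B bound.

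The main obstacle is the intermediate range $3 \le e \le k - 2$, in which the equation genuinely mixes odd and even $x_i$'s and the even coordinates no longer satisfy a copy of $\mathcal{E}$ on their own. A naive two-color lift already fails at $k = 5$: the tuple $(8, 10, 1, 17, 36)$ is a valid solution whose $\chi$-values sum to $0 \pmod 5$ under the lift $A \mapsto 2$, $B \mapsto 3$. Consequently, the color assignment on the even elements must be refined beyond two colors---for instance, by giving distinct colors to each of the three B--B blocks and possibly further subdividing the middle block---and the remaining intermediate values of $e$ must be eliminated by combining the size constraints that the odd $x_i$'s impose on the even $x_i$'s through $\mathcal{E}$ with a residue analysis of the attainable sums $\sum_{i:\, x_i\text{ even}} \chi(x_i) \pmod k$. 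This careful bookkeeping of sub-block sizes and admissible residues is where I expect the main technical work to lie.
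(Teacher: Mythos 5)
This proposition is only quoted in the paper (it is Proposition~7 of Robertson's paper and is not reproved here), so there is no in-paper argument to compare against; judged on its own, your proposal has a genuine gap. You set up the right framework: color the odd integers with $1$, observe that any solution of $\mathcal{E}$ satisfies $\sum_{i=1}^{k}x_i = 2x_k$, so the number of odd entries among $x_1,\dots,x_k$ is even and the number $e$ of even entries is odd, and reduce the zero-sum condition to $\sum_{x_i\ \mathrm{even}}\chi(x_i)\equiv e \pmod{k}$. But the cases $3\le e\le k-2$ --- which you yourself identify as the heart of the matter --- are not handled, and the proposed remedy (refining the two Beutelspacher--Brestovansky blocks into several colors) undermines the one nontrivial case you do claim: once three or more colors appear on the even integers, $k$ colors summing to $0\pmod{k}$ need not be equal (e.g.\ $2+2+3+4+4\equiv 0\pmod 5$), so the $e=k$ case no longer reduces to a monochromatic solution among the halves. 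As written, no coloring is exhibited for general odd $k$ and no verification is carried out, so this is a strategy rather than a proof.

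The repair is not more colors but a better choice of the two. Keep $\chi=1$ on the odds, and set $\chi(2j)=2$ if $j$ lies in one extremal B--B class and $\chi(2j)=k$ if $j$ lies in the other. If a solution has $p$ even entries of the first kind and $q$ of the second (among all of $x_1,\dots,x_k$), the zero-sum condition reads $(k-p-q)\cdot 1+2p+kq\equiv 0$, i.e.\ $p-q\equiv 0\pmod{k}$. Since $p+q$ is odd, $p-q$ is odd and in particular nonzero, and $|p-q|\le p+q\le k$, so $p-q\equiv 0\pmod{k}$ forces $(p,q)=(k,0)$ or $(0,k)$. Then every $x_i$ is even with all halves $x_i/2\le k^2-k-2$ in a single B--B class, and halving $\mathcal{E}$ produces a monochromatic solution contradicting $S(k,2)=k^2-k-1$. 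This handles every value of $e$ uniformly, including your $k=5$ example $(8,10,1,17,36)$, whose colors under this assignment sum to $4\not\equiv 0\pmod 5$.
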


These results naturally led Robertson to pose the following four questions regarding $\sz{k}{r}$. 
\begin{question}
	 Does $\sz{k}{3} = 3k - 3$ for $3 \mid k$, $k \ge 6$?
\end{question}
\begin{question}
 Does $\sz{k}{4} = 4k - 5$ for $4 \mid k$, $k \ge 8$?
\end{question}
\begin{question}
 What is the exact value of $\szt{k}{4}$? ($\szt{k}{r}$ is the same as $\sz{k}{r}$, except that the coloring only uses the colors $0$ and $1$ rather than any value from $1$ to $r$.)
\end{question}
\begin{question}
 Is it true that $\sz{k}{k}$ is of the order $k^2$?
\end{question}
Question 3 was answered by Robertson, Roy, and Sarkar with the following theorem.
\begin{theorem}[\cite{robertson2}, Theorem 3]
Let $k$ and $r$ be positive integers such that $r \mid k$ and $k > r$. Then, $\szt{k}{r} = rk - 2r + 1$.
\end{theorem}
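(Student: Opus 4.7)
\emph{Lower bound.} My plan is to exhibit an explicit coloring of $\{1, 2, \ldots, rk - 2r\}$ admitting no zero-sum $\mathcal{E}$-solution, namely $\chi(i) = 1$ if $i \le k - 2$ and $\chi(i) = 0$ otherwise. For any putative $\mathcal{E}$-solution $(x_1, \ldots, x_k)$ in this range, $x_k = \sum_{i < k} x_i \ge k - 1$ forces $\chi(x_k) = 0$, so the color sum equals $j := |\{i < k : x_i \le k - 2\}|$ and the zero-sum condition demands $j \equiv 0 \pmod r$. Splitting the first $k - 1$ coordinates according to whether $x_i \le k - 2$ or $x_i \ge k - 1$ gives $x_k \ge j + (k-1)(k - 1 - j) = (k - 1)^2 - (k - 2) j$, which combined with $x_k \le r(k - 2)$ yields $j + r > k$. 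Since $r \mid k$, this forces $j \ge k$, contradicting $j \le k - 1$.

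\emph{Upper bound.} Let $n = rk - 2r + 1$ and $\chi : \{1, \ldots, n\} \to \{0, 1\}$. The key reduction will exploit $r \mid (k - r)$: for any $a \le n$, a solution $(a, \ldots, a, y_1, \ldots, y_{r-1}, y_r)$ consisting of $k - r$ copies of $a$ followed by $y_1, \ldots, y_{r-1}$ and $y_r = a(k - r) + y_1 + \cdots + y_{r-1}$ has color sum $\equiv \sum_{i=1}^{r} \chi(y_i) \pmod r$, which lies in $[0, r]$ and vanishes modulo $r$ precisely when $(y_1, \ldots, y_r)$ is monochromatic. Hence the upper bound reduces to finding, for some admissible $a$, a monochromatic solution to $y_r - (y_1 + \cdots + y_{r-1}) = a(k - r)$ with all variables in $\{1, \ldots, n\}$. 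I would first attempt the constant substitution $y_1 = \cdots = y_{r-1} = t$, yielding a 2-parameter family $y_r = a(k - r) + (r - 1) t$. For $r = 2$ the pairs arising from $a \in \{1, 2\}$ together cover all three pairs inside the length-three orbit $\{1, k - 1, 2k - 3\}$ under translation by $k - 2$, so pigeonhole on a 2-coloring of three elements finishes this case. For $r \ge 3$ I would supplement the constant family with non-constant substitutions such as $y_1 = s$, $y_2 = \cdots = y_{r-1} = t$, producing richer families of monochromatic targets and playing the resulting constraints against each other.

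\emph{Main obstacle.} The upper bound for $r \ge 3$. The reduced problem is a Rado-type inhomogeneous equation in $r - 1$ variables, and pinning down the exact monochromatic threshold at $n = rk - 2r + 1$ appears to require a careful combinatorial argument: one must show that the constraints imposed by assuming no monochromatic solution exists in \emph{any} of the parameterized families (both constant and non-constant, across all admissible base values $a$) collectively force an inconsistency in $\chi$. Arranging the bookkeeping so that the analysis closes exactly at $n$ and not a nearby value, and handling subcases indexed by $\chi(1)$ and $\chi(k-1)$ uniformly, will be the main technical hurdle.
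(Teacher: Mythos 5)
The paper does not actually prove this statement; it is quoted from Robertson, Roy, and Sarkar \cite{robertson2} as background, so there is no in-paper proof to compare yours against. Judged on its own terms, your proposal is half a proof. The lower bound is complete and correct: the two-block coloring ($\chi(i)=1$ for $i\le k-2$, $\chi(i)=0$ otherwise) on $\{1,\dots,rk-2r\}$ works, and your chain $x_k \ge (k-1)^2-(k-2)j$ together with $x_k\le r(k-2)$ gives $r+j>k$, whence $j\ge k$ by divisibility, contradicting $j\le k-1$. The reduction step of the upper bound is also sound: padding with $k-r$ copies of a single value $a$ kills that value's contribution modulo $r$ because $r\mid k-r$, and a sum of $r$ values in $\{0,1\}$ vanishes modulo $r$ exactly when they are all equal, so the problem becomes finding a monochromatic solution of $y_r=a(k-r)+y_1+\cdots+y_{r-1}$ in $\{1,\dots,rk-2r+1\}$.

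The genuine gap is that for $r\ge 3$ you never carry out this last step; you name it as the ``main obstacle'' and describe only a strategy (constant and near-constant substitutions played against each other), not an argument. This is the entire content of the upper bound: one must show that a $2$-coloring of $\{1,\dots,rk-2r+1\}$ cannot avoid monochromatic solutions in all of these families simultaneously, and that the threshold closes at exactly $rk-2r+1$ rather than some larger value. Your $r=2$ case (pigeonhole on the three pairs within $\{1,k-1,2k-3\}$) shows the flavor of what is needed, but it does not generalize automatically: for larger $r$ the equation has $r-1$ free variables and the interaction between the admissible ranges of $a$ and $t$ (for instance, $y_r=a(k-r)+(r-1)t\le rk-2r+1$ restricts $t$ sharply once $a\ge 2$) must be tracked carefully. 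Until that combinatorial core is supplied, only the inequality $\szt{k}{r}\ge rk-2r+1$ has been established.
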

We resolve the first two questions below, determine the exact value of $\sz{k}{r}$ in the case of $r$ prime and $k > r$, and provide fairly close (within $r$) bounds in other cases where $k > r$.

In particular, we prove the following upper bounds:

\begin{restatable}{theorem}{uprime}
\label{thm:uprime}
Let $r$ be an odd prime and let $k$ be a multiple of $r$ which is at least $2r$. Then $\sz{k}{r} \le kr - r$.
\end{restatable}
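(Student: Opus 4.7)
The plan is to show, given any $r$-coloring $\chi\colon[1,r(k-1)]\to\mathbb{Z}/r\mathbb{Z}$, that there is a zero-sum $\mathcal{E}$-solution. First I would restrict attention to solutions all of whose $k-1$ summands lie in $\{1,2,\ldots,r\}$. Writing $n_j$ for the number of summands equal to $j$, one has $\sum_j n_j = k-1$ and $x_k = k-1+S$ where $S := \sum_{j\ge 2}(j-1)n_j\in[0,(r-1)(k-1)]$. A short calculation (using $r\mid k$) rewrites the zero color-sum condition as
\[
\chi(k-1+S)\ \equiv\ \chi(1)-\sum_{j\ge 2} n_j\,\delta_j\pmod r,\qquad\text{where}\quad\delta_j:=\chi(j)-\chi(1).
\]
The whole problem thus reduces to choosing $(n_2,\ldots,n_r)$ making this equation hold.

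The central case split is on whether $\chi$ is affine on $\{1,\ldots,r\}$. If it is not, pick $j^*\in[3,r]$ with $\mu:=\delta_{j^*}-(j^*-1)\delta_2\not\equiv 0\pmod r$. For any sufficiently large $S$, the trade $(n_{j^*},n_2)\mapsto(n_{j^*}+1,n_2-(j^*-1))$ preserves $S$ while shifting the right-hand side of the color equation by $\mu$. Because $r$ is prime and $\mu$ is invertible, iterating this trade $r$ times cycles through every residue of $\mathbb{Z}/r\mathbb{Z}$, so some configuration realizes the required match with $\chi(k-1+S)$. The hypothesis $k\ge 2r$ is what guarantees $n_{j^*}$ can take $r$ distinct non-negative values while $n_2,n_1\ge 0$.

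If $\chi$ \emph{is} affine on $\{1,\ldots,r\}$, say $\delta_j=(j-1)\delta$, the condition collapses to $\chi(m)\equiv\chi(1)-\delta(m+1)\pmod r$ for some $m\in[k-1,r(k-1)]$. When $\chi$ is globally affine with slope $\delta$ on the whole range, substituting $m=k$ (realizable by $x_1=2,\,x_2=\cdots=x_{k-1}=1$) reduces the condition to $2k\delta\equiv 0\pmod r$, which holds since $r$ is odd and $r\mid k$. Otherwise there is some $m^*\in[r+1,r(k-1)]$ at which $\chi$ deviates from the affine prediction, and I would introduce $m^*$ as a summand: varying its multiplicity (with the remaining summands in $\{1,\ldots,r\}$) shifts the color sum by multiples of a nonzero residue built from the deviation, and primality of $r$ again makes these shifts span $\mathbb{Z}/r\mathbb{Z}$.

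The hard part is this last sub-case---$\chi$ rigidly affine on $\{1,\ldots,r\}$ but breaking affinity further out. In that regime the clean ``trade within $\{1,\ldots,r\}$'' argument produces no new color sums, so one must combine the rigid sub-structure with the deviation at $m^*$ while keeping all summands (including $m^*$ used $O(r)$ times) inside $[1,r(k-1)]$. The combinatorial bookkeeping on the $n_j$'s together with the multiplicity of $m^*$ is where the hypothesis $k\ge 2r$ enters crucially, and is the step I expect to require the most work.
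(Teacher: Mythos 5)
Your overall architecture matches the paper's first half: reduce to solutions whose $k-1$ summands lie in $\{1,\dots,r\}$, and use primality of $r$ to force $\chi$ to be affine there. But your specific trade in the non-affine case has a quantitative problem: moving $(n_{j^*},n_2)\mapsto(n_{j^*}+1,\,n_2-(j^*-1))$ through $r$ values of $n_{j^*}$ consumes up to $(r-1)(j^*-1)$, i.e.\ on the order of $r^2$, copies of the summand $2$, while only $k-1$ summands are available; at $k=2r$ the trade runs out of room. The paper's trade is the nearest-neighbour one, exchanging two copies of $\alpha$ for one $\alpha-1$ and one $\alpha+1$, which costs only $2(r-1)\le k-1$ copies of $\alpha$ and yields the second-difference relation $2\chi(\alpha)\equiv\chi(\alpha-1)+\chi(\alpha+1)\pmod r$ for $2\le\alpha\le r-1$; since non-affineness on $\{1,\dots,r\}$ is equivalent to some second difference being nonzero, this repair is routine.

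The genuine gap is the sub-case you yourself flag as the hard part ($\chi$ affine on $\{1,\dots,r\}$ but not globally), which you leave open, and your sketched strategy for it does not work: using a deviation point $m^*$ with multiplicities $0,1,\dots,r-1$ while keeping $x_k\le r(k-1)$ forces roughly $(r-1)m^*+(k-r)\le r(k-1)$, i.e.\ $m^*\le k$, whereas your $m^*$ is an arbitrary deviation point and may be as large as $r(k-1)$. The paper's resolution is to aim only at the point $k$ and at a single fixed target $x_k=kr-r$: for each $0\le s\le r-1$ one can write $kr-r=s\cdot k+\alpha_1+\cdots+\alpha_{k-1-s}$ with all $\alpha_i\in[1,r]$, and affineness on $[1,r]$ (normalized by a global translation so that $\chi(\alpha)\equiv\alpha\chi(1)$) makes the colour sum $s\chi(k)+\chi(kr-r)\pmod r$; avoiding zero for every $s$ forces $\chi(k)\equiv 0\pmod r$ by primality. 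The punchline is that this forced \emph{agreement} with the affine extension --- not a deviation --- is what is fatal: the solution $(k-2)\cdot 1+2=k$ then has colour sum $k\chi(1)+\chi(k)\equiv 0\pmod r$. Your case analysis never produces this dichotomy, so as written the proof is incomplete.
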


\begin{restatable}{theorem}{ufour}
\label{thm:ufour}
Let $k$ be a multiple of $4$ which is at least $8$. Then $\sz{k}{r} \le 4k - 5$.
\end{restatable}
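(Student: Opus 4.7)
The plan is to argue by contradiction: assume a 4-coloring $\chi : \{1, \ldots, 4k-5\} \to \mathbb{Z}/4\mathbb{Z}$ admits no zero-sum solution to $\mathcal{E}$, and derive a contradiction by producing one. A useful first move is a \emph{shift normalization}: since $4 \mid k$, replacing $\chi$ by $\chi + c \pmod 4$ changes the color-sum of every $k$-term sequence by $kc \equiv 0 \pmod 4$, so the zero-sum property is preserved. Hence WLOG $\chi(1) = 0$.

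With $\chi(1) = 0$ in hand, the natural source of constraints is \emph{$1$-padded} solutions. For any multi-subset $T = (t_1, \ldots, t_j)$ of $\{2, \ldots, 4k-5\}$ with $j \le k-1$ and $s := (k-1-j) + t_1 + \cdots + t_j \le 4k-5$, the tuple $(1, \ldots, 1, t_1, \ldots, t_j, s)$ (with $k-1-j$ ones) solves $\mathcal{E}$, and its color-sum collapses to $\chi(t_1) + \cdots + \chi(t_j) + \chi(s)$, which must be nonzero mod $4$ by assumption. Specializing $T = (\underbrace{2, \ldots, 2}_j)$ gives
\[ j\chi(2) + \chi(k - 1 + j) \not\equiv 0 \pmod 4, \qquad j = 0, 1, \ldots, k-1;\]
specializing $T = (a, 2, \ldots, 2)$ yields $\chi(a) + (j-1)\chi(2) + \chi(k-3+a+j) \not\equiv 0 \pmod 4$; and adding further ``heads'' $b, c, \ldots$ produces additional forbidden residues at each target position $s$.

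I would then split on $\chi(2)$. If $\chi(2) = 0$, the twos-only constraints force $\chi(s) \ne 0$ on all of $[k-1, 2k-2]$, and iterating with $T = (a, 2, \ldots, 2)$ for $a \in \{3, 4, \ldots, 2k\}$ propagates analogous exclusions deeper into $[k-1, 4k-5]$ (varying $a$ and the number of 2s), restricting $\chi$ enough to exhibit a zero-sum $\mathcal{E}$-solution directly. If $\chi(2) \ne 0$, the forbidden residues $-j\chi(2) \pmod 4$ cycle as $j$ grows, imposing a near-periodic structure on $\chi$ restricted to $[k-1, 2k-2]$, which is again exploited to produce a zero-sum solution. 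In both branches, a secondary sub-split on $\chi(3)$ (and possibly $\chi(4)$) sharpens the constraints on the ``high'' range $[2k-1, 4k-5]$ via longer $T$'s, where each increment in $|T|$ either reduces the multiplier of $\chi(1)$ to $0$ mod $4$ (useful) or forces the inclusion of an extra ``head'' with controlled color.

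The main obstacle I anticipate is this final construction step. Even given a rich mesh of forbidden-residue constraints, turning them into an explicit $\mathcal{E}$-solution with color-sum $0$ and all terms in $[1, 4k-5]$ requires careful bookkeeping: the tightness of the bound $4k-5$ strongly suggests that elements near the upper end must appear as $x_k$, so their colors have to be pinned down through the longer $T$'s, and the case splits on $\chi(2)$ and the other small colors must be organized so that every subcase admits an explicit zero-sum witness rather than a residual indeterminacy.
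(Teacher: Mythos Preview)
Your opening moves---contradiction, the shift normalization, and $1$-padded solutions---are the same as the paper's. But what you have is a plan, not a proof, and the plan as written is missing the two ideas that actually close the argument.

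First, your only mechanism for constraining $\chi$ is varying the number of $2$'s (or heads) in a padded solution, which for each target $s$ forbids at most one or two residues. The paper instead uses a \emph{second-difference} trick: for $2\le \alpha$, the solutions
\[
(k-1-2s)\cdot\alpha + s\cdot(\alpha-1) + s\cdot(\alpha+1) = (k-1)\alpha,\qquad 0\le s\le 3,
\]
all have the same target but color-sum varying by $s\bigl(\chi(\alpha-1)+\chi(\alpha+1)-2\chi(\alpha)\bigr)$. This forces $\chi(\alpha-1)+\chi(\alpha+1)-2\chi(\alpha)$ to be even (else all four residues are forbidden at $(k-1)\alpha$), and after further work pins down $\chi$ on $\{1,2,3,4\}$ completely: either all four are even, or (after normalization) $\chi(\alpha)\equiv\alpha$. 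Your split on $\chi(2)$ alone, with sub-splits on $\chi(3),\chi(4)$, would eventually recover this, but you have not indicated how, and the bookkeeping is substantial---the paper spends several pages on it even with the second-difference shortcut.

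Second, in the ``all even'' branch the paper does not construct a zero-sum solution from scratch; it shows $\chi(s)$ is even for all $s\le 2k-3$ and then invokes $\sz{k}{2}=2k-3$ to get a $2$-zero-sum (hence $4$-zero-sum) solution for free. In the ``$\chi(\alpha)\equiv\alpha$'' branch, the paper introduces $\star$-sequences---arbitrary sequences in $\{1,2,3,4\}$, whose color-sum equals their sum---which let one write $m = c\cdot k + (m-ck)(\star)$ and read off $\chi(m)\not\equiv -m - c\chi(k)$ for $c=0,1,2,3$. This is what pins $\chi(m)$ down on $[3k-3,4k-5]$ and then on $[2k-2,3k-4]$, after which an explicit solution $(2k-2)+k+(k-3)(\star)=4k-5$ gives the contradiction. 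Your description of ``propagating exclusions'' and ``near-periodic structure'' gestures at this but does not get there; the $\star$-sequence device and the parity case split on $\chi(k)$ are the substantive steps you are missing.
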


\begin{restatable}{theorem}{ucomp}
\label{thm:ucomp}
Let $r \mid k$, $k \ge 2r$, and $r \ge 6$. Let the prime factors of $r$ be $p_0, p_1, \dots, p_{t_1}$ (the $p_i$ need not be distinct).Then $\sz{k}{r} \le kr - \sum_{i=0}^{t-1}(p_i - 1) - 1$. 
\end{restatable}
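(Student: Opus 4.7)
I propose a proof by induction on $t$, the number of prime factors of $r$ counted with multiplicity. The base case $t = 1$ forces $r$ to be an odd prime at least $7$ (since $r \ge 6$ and $r \ne 4$), so the claimed bound $kr - (r - 1) - 1 = kr - r$ is exactly the content of Theorem \ref{thm:uprime}.

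For the inductive step, factor $r = ps$ with $p := p_{t-1}$ prime and $s := r/p = p_0 \cdots p_{t-2}$ having $t - 1$ prime factors. The inductive hypothesis provides $\sz{k}{s} \le M_s := ks - \sum_{i=0}^{t-2}(p_i - 1) - 1$, using $s \mid k$ and $k \ge 2r \ge 2s$. The small edge cases where $s < 6$---namely $s \in \{2, 4\}$ and the odd primes $s \in \{3, 5\}$---fall respectively under Robertson's $\sz{k}{2} = 2k - 3$, Theorem \ref{thm:ufour}, and Theorem \ref{thm:uprime}; each gives a bound at least as strong as $M_s$, so the IH is uniform across all cases. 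Setting $N := kr - \sum_{i=0}^{t-1}(p_i - 1) - 1$, a direct computation yields $N = p M_s + (p-1)\sum_{i=0}^{t-2}(p_i - 1) \ge p M_s$, so $[N]$ comfortably accommodates $p$ scaled copies of $[M_s]$.

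Given any coloring $\chi : [N] \to \mathbb{Z}/r\mathbb{Z}$, I would then, for each $c \in \{1, 2, \ldots, p\}$, define the rescaled coloring $\chi^{(c)} : [M_s] \to \mathbb{Z}/s\mathbb{Z}$ by $\chi^{(c)}(i) := \chi(ci) \bmod s$ (well-defined since $cM_s \le pM_s \le N$). Applying the IH to $\chi^{(c)}$ produces a solution $(y_1^{(c)}, \ldots, y_k^{(c)}) \in [M_s]^k$ to $\mathcal{E}$ with $\sum_i \chi(c y_i^{(c)}) \equiv 0 \pmod s$. Because $\mathcal{E}$ is linear under coordinatewise scaling, the tuple $(c y_1^{(c)}, \ldots, c y_k^{(c)}) \in [N]^k$ is itself a valid solution to $\mathcal{E}$ and inherits the mod-$s$ zero-sum of its color sum. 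Writing $\sum \chi(c y_i^{(c)}) = s \cdot m^{(c)}$ with $m^{(c)} \in \mathbb{Z}/p\mathbb{Z}$, the whole problem reduces to locating some $c$ for which $m^{(c)} \equiv 0 \pmod p$.

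The main obstacle, and the technical heart of the proof, is that a naive pigeonhole on $p$ values of $m^{(c)}$ distributed over the $p$ classes of $\mathbb{Z}/p\mathbb{Z}$ does not automatically place any in the class $0$. To close the argument, one of two refinements seems necessary: either strengthen the IH to produce at least $2p - 1$ distinct mod-$s$ solutions per coloring and then invoke the classical Erd\H{o}s--Ginzberg--Ziv theorem on $\mathbb{Z}/p\mathbb{Z}$ to extract a $p$-subset of quotients summing to zero, or enlarge the parametric family of candidate solutions beyond bare scalings---e.g., by combining scalings with translations of the form $(y_1, \ldots, y_{k-1}, y_k) \mapsto (y_1 + ds, \ldots, y_{k-1} + ds, y_k + (k-1)ds)$, which preserve $\mathcal{E}$---and arrange that the resulting $m$-values sweep through all of $\mathbb{Z}/p\mathbb{Z}$. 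The delicate point is that $\mathcal{E}$, unlike plain zero-sum problems, does not admit taking unions or disjoint combinations of solutions, so any EGZ-style aggregation has to be realized as a single scaling or translation parameter rather than as a subset of collected solutions. The exact slack $(p - 1) \sum_{i=0}^{t-2}(p_i - 1)$ is precisely what absorbs one EGZ-extraction's overhead per prime factor, which is how the final bound acquires its shape $\sum(p_i - 1)$.
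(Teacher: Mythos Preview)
Your inductive framework is appealing, but the step you flag as ``the main obstacle'' is in fact a genuine gap that neither of your proposed refinements closes. The EGZ route fails for exactly the reason you yourself state: even if you manufacture $2p-1$ solutions to $\mathcal{E}$ whose color sums vanish mod $s$, EGZ only hands you a \emph{subset} of $p$ quotients summing to zero in $\mathbb{Z}/p\mathbb{Z}$, and there is no operation on $\mathcal{E}$ that merges $p$ separate $k$-tuples into a single solution. The translation route fares no better: passing from $(y_1,\dots,y_k)$ to $(y_1+ds,\dots,y_{k-1}+ds,\,y_k+(k-1)ds)$ changes the color sum by $\sum_i\bigl(\chi(y_i+ds)-\chi(y_i)\bigr)$, a quantity over which you have no control whatsoever, since $\chi$ is an arbitrary map into $\mathbb{Z}/r\mathbb{Z}$. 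Nothing forces these increments to be multiples of $s$, let alone to sweep out all residues mod $p$. The slack $(p-1)\sum_{i<t-1}(p_i-1)$ you identified is real, but you have not exhibited any mechanism by which it is actually consumed.

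The paper's argument is structurally quite different: it is a single direct proof by contradiction, with no induction on the prime factorization. Assuming no zero-sum solution exists, it first pins down $\chi$ on an initial segment by showing $\chi(\alpha)\equiv\alpha\chi(1)\pmod r$ for all $\alpha\le\lfloor 3r/8\rfloor+1$; this is done by repeatedly writing down solutions to $\mathcal{E}$ of the form $(k-1-2s)\cdot\alpha+s\cdot(\alpha-1)+s\cdot(\alpha+1)=(k-1)\alpha$ and letting $s$ vary. The composite case differs from the prime case in that $2\chi(\alpha)-\chi(\alpha-1)-\chi(\alpha+1)$ need not be a unit mod $r$, so one descends through a chain of divisors $\nu_0=r,\nu_1,\dots,\nu_t=1$, at each stage introducing a new pivot element $h_j$ whose color deviates from the expected value mod $\nu_j$. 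It is precisely this chain of pivots---one per factor $\lambda_j$ of $r$---that produces the defect $\sum(\lambda_j-1)$, which is then bounded by $\sum(p_i-1)$ via the elementary inequality $\sum(x_i-1)\le\prod x_i$. A second pass with the same descent machinery forces $\chi(k)\equiv k\chi(1)\pmod r$, after which $(k-2)\cdot 1+2=k$ gives the contradiction. So the $\sum(p_i-1)$ arises not from an EGZ overhead but from the accumulated cost of the pivot elements needed to push constraints down through the divisor lattice of $r$.
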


The last theorem shows that whenever $k$ is a multiple of $r$ which is greater than $r$, $\sz{k}{r} \le kr - 1$.
We also prove two lower bound theorems:
\begin{restatable}{theorem}{lodd}
\label{thm:lodd}
Suppose $r$ is odd. Then $\sz{k}{r} \ge kr - r$.
\end{restatable}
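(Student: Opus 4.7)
To prove $\sz{k}{r} \ge kr - r$ when $r$ is odd, I would exhibit an explicit $r$-coloring $\chi$ of $\{1, 2, \ldots, kr - r - 1\}$ which admits no $r$-zero-sum solution to $\mathcal{E}$. If $r \nmid k$ then $\sz{k}{r} = \infty$ and the bound is vacuous, so I will assume $r \mid k$ and write $k = r\ell$.

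\textbf{Construction.} The natural starting point is a block partition of $\{1, \ldots, r(k-1) - 1\}$ into $r$ intervals $B_1, \ldots, B_r$ with $\chi(x) = i$ for $x \in B_i$. The uniform choice $|B_i| = k-1$ (with the last block truncated) fails because the trivial all-ones solution $(1, 1, \ldots, 1, k-1)$ lies in $B_1$ and, since $r \mid k$, its color sum is $k \equiv 0 \pmod r$. Shrinking $B_1$ to size $k-2$ places $k-1$ in $B_2$ and removes this obstruction, and I would take $|B_2| = \cdots = |B_r| = k-1$, yielding the total size $(k-2) + (r-1)(k-1) = r(k-1) - 1$. Since a naive block assignment also fails on solutions such as $(1, 1, \ldots, 1, j, j+k-2)$ where $j$ and $j+k-2$ straddle two consecutive blocks, I would refine the coloring either by reassigning a small number of boundary elements between adjacent blocks, or by composing the block index with a carefully chosen permutation of $\mathbb{Z}/r$, so as to neutralize each such near-monochromatic obstruction.

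\textbf{Verification.} For each color distribution $(n_1, \ldots, n_r)$ with $\sum n_i = k$ and $\sum i \cdot n_i \equiv 0 \pmod r$, I would show that no solution $(x_1, \ldots, x_k)$ to $\mathcal{E}$ realizes that distribution within $\{1, \ldots, r(k-1) - 1\}$. The argument compares the minimum achievable value of $\sum_{i<k} x_i$ given the distribution (with $x_k$ necessarily a ``large'' element of that distribution) against the range bound $x_k \le r(k-1) - 1$. Many color distributions are ruled out by the monotonic growth of the minimum sum as the mass shifts to later blocks, and the remaining ones are handled by the boundary-element refinements from the construction step.

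\textbf{Role of $r$ odd and main obstacle.} The oddness of $r$ enters through $\gcd(2,r) = 1$: from $\sum_{i<k} x_i = x_k$ one has $\sum_i x_i \equiv 2 x_k \pmod r$, and invertibility of $2$ means that certain zero-sum congruence conditions reduce to $x_k \equiv 0 \pmod r$ (or analogous linear constraints on $x_k$), which the chosen block boundaries can be arranged to violate. The main obstacle is managing the case analysis over all color distributions---especially configurations that would be feasible in $\{1, \ldots, kr - r\}$ but just barely fail in $\{1, \ldots, kr - r - 1\}$; this one-unit margin is precisely the improvement gained over the general lower bound $kr - r - 1$, and tracking it sharply is where the oddness of $r$ is indispensable (consistent with the matching upper bound $\sz{k}{r} \le kr - r$ for odd prime $r$ proved in Theorem~\ref{thm:uprime}).
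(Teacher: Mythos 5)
There is a genuine gap: the core construction you propose --- a coloring that is constant on consecutive intervals $B_1,\dots,B_r$ of length about $k-1$, possibly post-composed with a permutation of $\mathbb{Z}/r\mathbb{Z}$ and patched at block boundaries --- cannot be repaired into a valid coloring, and the obstruction is not a boundary phenomenon. Take $r=3$, $k=6$, so the domain is $\{1,\dots,14\}$ with $B_1=\{1,2,3,4\}$, $B_2=\{5,\dots,9\}$, $B_3=\{10,\dots,14\}$ colored $\sigma(1),\sigma(2),\sigma(3)$. The solutions $(2,2,3,3,3;13)$, $(1,1,1,1,9;13)$, and $(1,1,1,5,5;13)$ realize the block distributions $(5,0,0)$, $(4,1,0)$, $(3,2,0)$ on the left-hand side, forcing $\chi(13)\not\equiv -5\sigma(1)$, $-4\sigma(1)-\sigma(2)$, $-3\sigma(1)-2\sigma(2)\pmod 3$ respectively. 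These three forbidden values form an arithmetic progression with common difference $\sigma(1)-\sigma(2)\not\equiv 0$, hence exhaust $\mathbb{Z}/3\mathbb{Z}$, so $13$ has no admissible color; the witnesses use elements spread through the interiors of $B_1$ and $B_2$, so no reassignment of a few boundary elements (nor any permutation $\sigma$) saves the construction. The structural reason is that for an interval-based coloring the left-hand color sum depends only on the distribution over blocks while the set of reachable $x_k$ for each distribution is a long interval, so nearby distributions stack incompatible constraints on the same $x_k$. Your verification step, which is only sketched, would run into exactly this.

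The paper's coloring avoids this by making the color track the \emph{value}: it arranges $\chi(m)\equiv m+2j\pmod r$ for some offset $0\le j\le \alpha-1$ when $m$ lies in the $\alpha$-th block $((\alpha-1)(k-1),\alpha(k-1)]$, and simultaneously forbids $\chi(m)\in\{-m,-m-2,\dots,-m-2(\alpha-1)\}$ for $m\ge \alpha(k-1)$. Then for any solution the left-hand color sum is $x_k+2\gamma$ with $\gamma$ bounded by the total offset budget, while $x_k$ is large enough that exactly those residues are forbidden for $\chi(x_k)$, giving the contradiction. The oddness of $r$ enters not through $\sum x_i\equiv 2x_k$ as you suggest, but because $2$ generates $\mathbb{Z}/r\mathbb{Z}$: this is what makes the permitted and forbidden sets genuine $\alpha$-element progressions and lets one check (via $0\not\equiv 2\pmod r$ at the critical points $m=\alpha k-\alpha$) that a permitted color always exists. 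Your instinct that invertibility of $2$ is the crux is correct, but it must be wired into the coloring itself, not into the block boundaries.
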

\begin{restatable}{theorem}{leven}
\label{thm:leven}
Suppose $r$ is even. Then $\sz{k}{r} \ge kr - r - 1$.
\end{restatable}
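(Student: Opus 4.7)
My plan is to prove $\sz{k}{r} \geq kr - r - 1$ for even $r$ by exhibiting an explicit $r$-coloring $\chi : \{1, 2, \ldots, kr - r - 2\} \to \{1, \ldots, r\}$ that admits no $r$-zero-sum solution to $\mathcal{E}$. The approach generalizes Robertson's construction for $r = 2$ (which is essentially the $r=2$ special case of this theorem): color $[1, k-2]$ with $1$ and $[k-1, 2k-4]$ with $2$. For $r = 2$ this works because the range constraint $N = 2k-4$ forces $x_i \leq k-2$ for every $i < k$ -- if $x_j \geq k-1$ with $j < k$, then $x_k \geq (k-1)+(k-2) = 2k-3 > N$ -- so every solution has exactly one $x_i$ of color $2$, namely $x_k$, yielding color sum $k + 1 \equiv 1 \pmod 2$.

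For general even $r \geq 4$, the two-color construction breaks because values $x_i \geq k-1$ with $i < k$ become compatible with the range bound $N = kr - r - 2$. I would instead partition $\{1, \ldots, kr - r - 2\}$ into $r$ intervals $I_1, \ldots, I_r$ and assign $I_j$ the color $j$, taking (as a natural starting point) $|I_1| = |I_r| = k-2$ and $|I_j| = k-1$ for $2 \le j \le r-1$, so that $\sum_j |I_j| = kr - r - 2$. The verification reduces to a case analysis on tuples $(m_1, \ldots, m_r)$ with $m_j = |\{i : x_i \in I_j\}|$ and $\sum_j m_j = k$: using $x_k = \sum_{i < k} x_i \geq \sum_j m_j \cdot \min(I_j)$ combined with $x_k \leq kr - r - 2$ to determine which tuples are realizable by any solution to $\mathcal{E}$, then checking that the color sum $\sum_j j \cdot m_j \not\equiv 0 \pmod r$ for each realizable tuple. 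Tuples with color sum $\equiv 0 \pmod r$ must be shown incompatible with the range constraint.

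The main obstacle is completeness of the case analysis. For even $r$, the residue class $\equiv 0 \pmod r$ can be reached by many distinct combinations of block-counts (e.g., a mix of one ``large'' $x_i$ with $i < k$, several ``small'' ones, and $x_k$ falling in a particular block), so the construction must either use a non-uniform block distribution or incorporate individual color exceptions at specific points near the boundary to eliminate all residual bad configurations. The fact that the lower bound is $kr - r - 1$ rather than the stronger $kr - r$ obtained in Theorem~\ref{thm:lodd} for odd $r$ reflects the following structural phenomenon: at $N = kr - r - 1$ an additional ``balanced'' configuration (which is suppressed for odd $r$ by the non-divisibility of $2$ modulo $r$) becomes realizable and produces color sum $\equiv 0 \pmod r$ under any $r$-coloring of that enlarged range, so we must settle for one fewer element in the even case.
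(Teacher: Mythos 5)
Your construction does not work, and the issue is more than an incomplete case analysis: the coloring that is constant on each block $I_j$ with value $j$ genuinely admits zero-sum solutions. Concretely, take $r=4$, $k=8$, so $N = kr-r-2 = 26$ and your blocks are $I_1=\{1,\dots,6\}$, $I_2=\{7,\dots,13\}$, $I_3=\{14,\dots,20\}$, $I_4=\{21,\dots,26\}$. The solution $1+1+1+1+1+7+7=19$ to $\mathcal{E}$ has color sum $5\cdot 1 + 2\cdot 2 + 1\cdot 3 = 12 \equiv 0 \pmod 4$. The structural reason is that coloring by block index discards the residue of $x_i$ itself, so the color sum $\sum_j j\,m_j$ is decoupled from the identity $\sum_{i<k} x_i = x_k$, and nothing prevents it from hitting $0 \bmod r$; no reordering of the block labels or minor boundary exceptions repairs this, since the number of realizable tuples $(m_1,\dots,m_r)$ grows with $k$ while the coloring has only $r$ degrees of freedom.

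The paper's coloring is built on a different principle that you would need to adopt: on the $\alpha$-th block $((\alpha-1)(k-1), \alpha(k-1)]$ it takes $\chi(m) \equiv m + \delta$ for some drift $\delta \in \{0,1,\dots,\alpha-1\}$ (and forbids $\chi(m) \in \{-m,\dots,-m-(\alpha-1)\}$ on the complementary range), so that for any solution the color sum of $x_1,\dots,x_{k-1}$ is forced to be $x_k + \gamma$ with $0 \le \gamma \le \sum_\alpha (\alpha-1)b_\alpha$, while the lower bound $x_k \ge (1+\sum_\alpha(\alpha-1)b_\alpha)(k-1)$ places $x_k$ deep enough that $\chi(x_k)$ avoids every value $-x_k-\gamma'$ in that same range. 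The equation $\mathcal{E}$ is thus used directly to control the color sum, which is exactly what your block-index coloring cannot do. Your closing speculation about why the even case loses one element is also not how the discrepancy arises in the paper: for odd $r$ the permitted/forbidden sets use common difference $2$ (invertible mod odd $r$), which lets the last block have full length $k-1$, whereas for even $r$ one must use common difference $1$ and shorten the final block by one to keep the permitted and forbidden sets from coinciding at $m=(r-1)(k-1)$.
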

As a corollary, this shows that if $r$ is an odd prime and $k$ is a multiple of $r$ which is greater than $r$, then $\sz{k}{r} = kr - r$. Furthermore, if $r$ is any number and $k$ is a multiple of $r$ which is greater than $r$, then $kr - r - 1 \le \sz{k}{r} \le kr - 1$. Note that while the regular generalized Schur numbers had bounds 
with different exponentials even in the simplest case of $k = 3$, it was possible to find similar upper and lower bounds for every case of the zero-sum generalized Schur numbers. We would be interested to know how the bounds given in this paper can be improved, and also whether it is possible to find close upper and lower bounds in other zero-sum Ramsey type problems.

\section{Upper Bounds}

While the result for composite $r$ in Theorem~\ref{thm:ucomp} supersedes the results for prime $r$ in Theorem~\ref{thm:uprime}, we provide separate proofs of each for greater clarity. We begin by restating and proving Theorem~\ref{thm:uprime}.

\uprime*

\begin{proof}
Let $\chi: \{1,\dots,kr - r\} \longrightarrow \{1, \dots, r \}$ be an $r$-coloring. We will show by contradiction that $\chi$ admits a zero-sum solution to $\mathcal{E}$. So, suppose that there is no such solution.
First, we claim that 
\begin{equation} \label{recurrence}
2 \chi(\alpha) \equiv \chi(\alpha - 1) + \chi(\alpha + 1) \pmod{r}
\end{equation} for $2 \leq \alpha \leq r-1$.

To show this, let $s$ be any integer between $0$ and $r - 1$. Then the following is a solution to $\mathcal{E}$:
$$(k - 1 - 2s)\cdot \alpha + s \cdot (\alpha - 1) + s \cdot (\alpha + 1) = (k-1)\alpha.$$
Here $a \cdot x$ denotes the sum of a sequence of $a$ copies of $x$. 
Hence,
$$(k - 1 - 2s)\chi(\alpha) + s(\alpha - 1) + s(\alpha + 1) + \chi((k-1)\alpha) \nequiv 0 \pmod{r},$$
which can be rewritten as
$$(k - 1)\chi(\alpha) + \chi((k-1)\alpha) \nequiv s(2\chi(\alpha) - \chi(\alpha - 1) - \chi(\alpha + 1)).$$
Since $r$ is prime and $s$ can take on any value modulo $r$, this implies that $2 \chi(\alpha) \equiv \chi(\alpha - 1) + \chi(\alpha + 1) \pmod{r}$, as desired.
Since $r \mid k$, applying global translations to $\chi$ does not affect which sequences have zero-sum images under $\chi$, so without loss of generality we may translate $\chi$ so that $\chi(2) \equiv 2\chi(1) \pmod{r}$.
Then by \eqref{recurrence} we have 
	\begin{equation} \label{formula}
	\chi(\alpha) \equiv \alpha \chi(1) \pmod{r}
	\end{equation}
for $\alpha \leq r$.
Let $m = kr - r.$ We will show that $\chi(k) \equiv 0 \pmod{r}$ by showing that
		\begin{equation} \label{inequality}
		-\chi(m) \nequiv s \chi(k) \pmod{r}
		\end{equation}
for $0 \leq s \leq r-1$.

Because $s \le r - 1$, we have $k - 1 - s \le m - sk \le r(k - 1 - s)$, so there exists a sequence $\alpha_1, \dots, \alpha_{k-1-s}$ of integers such that $1 \le \alpha_i \le r$ for all $i$ and $\alpha_1 + \dots + \alpha_{k-1-s} = m - sk$.
Then
$$s\cdot k + \alpha_1 + \dots + \alpha_{k - 1 - s}$$
is a solution to $\mathcal{E}$, so
$$s\chi(k) + \chi(\alpha_1) + \dots + \chi(\alpha_{k - 1- s}) + \chi(m) \nequiv 0 \pmod{r}.$$
By~\ref{formula}, this implies
$$-\chi(m) \nequiv s\chi(k),$$
as desired. So $\chi(k) \equiv 0 \pmod{r}$.
But then
$$(k - 2)\cdot 1 + 2 = k,$$
and 
$$(k-2)\chi(1) + \chi(2) + \chi(k) \equiv k\chi(1) + \chi(k) \equiv 0 \pmod{r},$$
contradicting our assumption that there is no zero-sum solution to $\mathcal{E}$.
\end{proof}

We now exhibit a more complicated proof of Theorem~\ref{thm:ufour}, the case $r = 4$. Although the basic structure of the proof is similar to to Theorem~\ref{thm:uprime}, we can no longer use the fact that if $r$ is prime then all numbers are either multiples of $r$ or relatively prime to $r$.

\ufour*
\begin{proof}
We proceed by contradiction. Suppose that there is a function $\chi: \{1, \dots, 4k-5\} \rightarrow \{1,2,3,4\}$ such that there are no 4-zero-sum solutions to $\mathcal{E}$. First we will show that $\chi(1), \chi(2), \chi(3),$ and $\chi(4)$ are all distinct.

Without loss of generality, assume $\chi(2) \equiv 2$. Note that $k-1 \ge 2(r-1)$, so for $0 \le s \le 3$ we have
\begin{align*}
&(k-1 - 2s)\cdot 2 + s \cdot 1 + s \cdot 3  = 2k - 2 \\
&\implies (k-1-2s)\chi(2) + s(\chi(1) + \chi(3)) + \chi(2k-2) \nequiv 0 \pmod{4}\\
&\implies \chi(2k-2) \nequiv -(k-1)\chi(2) - s(\chi(1) + \chi(3) - 2\chi(2)) \pmod{4}\\
&\implies \chi(2k-2) \nequiv 2 - s(\chi(1) + \chi(3)) \pmod{4}.
\end{align*}
If $\chi(1) + \chi(3)$ is relatively prime to $4$, no value can be assigned to $\chi(2k-2)$. So $\chi(1) + \chi(3)$ must be even.
Suppose $\chi(1) + \chi(3) \equiv 2$. Then $\chi(2k - 2)$ is odd, and we have for $s \in \{0,1\}$
\begin{align*}
&(2k-2) + (k-2 - 2s)\cdot 2 + s \cdot 1 + s \cdot 3 = 4k-6 \\
&\implies \chi(2k-2) + (k-2)\chi(2) + s(\chi(1) + \chi(3) - 2\chi(2)) + \chi(4k-6) \nequiv 0 \pmod{4}\\
&\implies \chi(4k-6) \nequiv -\chi(2k-2) - 2s \pmod{4}\\
&\implies \chi(4k-6) \text{ is even.}
\end{align*}
Now, again for $s \in \{0,1\}$ we have
\begin{align*}
&2\cdot k + (k - 3 - 2s) \cdot 2 + s \cdot 1 + s \cdot 3 = 4k - 6 \\
&\implies 2\chi(k) + (k - 3)\chi(2) + s(\chi(1) + \chi(3) - 2\chi(2)) + \chi(4k-6) \nequiv 0 \pmod{4}\\
&\implies 2\chi(k) + 2 + 2s + \chi(4k-6) \nequiv 0 \pmod{4}.
\end{align*}
But then $2\chi(k)$ must be odd, so this is a contradiction and $\chi(1) + \chi(3) \equiv 2\chi(2) \equiv 0$.

Now assume that $\chi(1)$ is even; that is, $2\chi(1) \equiv 0$. Note that this implies $\chi(3) \equiv \chi(1)$.
We can compute for $0 \le s \le r - 1$
\begin{align*}
&(k-1 - 2s)\cdot 3 + s \cdot 2 + s \cdot 4 = 3k - 3 \\
&\implies \chi(3k-3) + (k-1)\chi(3) + s(\chi(2) + \chi(4) - 2\chi(3)) \nequiv 0 \pmod{4} \\ 
&\implies \chi(3k-3) \nequiv \chi(3) - s(\chi(2) + \chi(4) - 2\chi(3)) \pmod{4} \\
&\implies \chi(3k-3) \nequiv \chi(3) - s(2 + \chi(4)) \pmod{4}.
\end{align*}
If $\chi(4)$ were odd, then no value could be assigned to $\chi(3k-3)$. Thus $\chi(4)$ is even.
Let $n(\star)$ denote a sequence of values from $1$ to $4$ summing to $n$. Now, for $m \in \{k, k+2, k+4, \dots, 4k - 6\}$, we can write $m$ as a sum of $1,2,3,$ and $4$, with an even number of each except possibly $2$. This is possible because we can assume there are not both $1$'s and $3$'s, and then by parity there must be an even number of whichever is there. Similarly if there are $4$'s we can assume there are no $1$'s. So the only possible bad case has an odd number of $4$'s, and even number of $3$'s, and an even number of $2$'s. If there is at least one $2$, exchange a $4$ and a $2$ for two $3$'s. If there are no $2$'s, then exchange two $3$'s for a $4$ and a $2$. If there are no $2$'s and at most one $3$, then the total sum is at least $4k - 5$, which is larger than we are concerned with.

So, for $m \in \{k, k+2, k+4, \dots, 4k-6\}$, there exist integers $a_i$ such that
$$m = 2a_1 \cdot 1 + 2a_2 \cdot 2 + 2a_3 \cdot 3 + 2a_4 \cdot 4 + 2,$$
which implies
$$\chi(m) + 2a_1\chi(1) + 2a_2\chi(2) + 2a_3\chi(3) + 2a_4\chi(4) + \chi(2) \nequiv 0 \pmod{4}.$$
Under our current assumptions, the colors of 1, 2, 3, and 4 are all even, and $\chi(2) \equiv 2$.
Hence,
	$$\chi(m) \nequiv 2 \pmod{4}.$$
Now, for $m \in \{2k - 2, 2k, \dots, 4k - 6\}$, there exist integers $a_i$ such that
$$m = k + 2a_1 \cdot 1 + 2a_2 \cdot 2  + 2a_3 \cdot 3 + 2a_4 \cdot 4.$$
Therefore. 
$$\chi(m) + \chi(k) + 2a_1\chi(1) + 2a_2\chi(2) + 2a_3\chi(3) + 2a_4\chi(4) \nequiv 0 \pmod{4},$$
which under our current assumptions implies 
$$\chi(m) \nequiv -\chi(k) \pmod{4}.$$

Similarly, for $m \in \{3k - 4, 3k - 2, \dots, 4k - 6 \}$, there exist integers $a_i$ such that
$$m = 2 \cdot k + 2a_1 \cdot 1 + 2a_2 \cdot 2 + 2a_3 \cdot 3 + 2a_4 \cdot 4 + 2,$$
which implies 
$$\chi(m) \nequiv -2\chi(k) + 2 \pmod{4}.$$

Finally, 
$$4k - 6 = 3\cdot k + (k-4)\cdot 1,$$
so 
$$\chi(4k - 6) \nequiv \chi(k) \pmod{4}.$$

If $\chi(k)$ is odd, no value may be assigned to $\chi(4k - 6)$. So we may assume $\chi(k)$ is even, and therefore $\chi(k) \equiv 0$.
Then for $m \in \{2k - 2, 2k, \dots, 4k - 6\}$, $\chi(m)$ must be odd.
Let $s$ be an even integer less than $k$, and suppose that $\chi(s)$ is odd. Note that $s$ is at least $6$.
Then there exist integers $a_i$ such that 
$$4k - 6 = 3\cdot s + 2a_1\cdot 1 + 2a_2\cdot 2 + 2a_3\cdot 3 + 2a_4\cdot 4,$$
which implies
$$\chi(4k - 6) \nequiv \chi(s) \pmod{4}.$$
On the other hand, there exist integers $b_i$ such that
$$4k - 6 = s + 2b_1\cdot 1 + 2b_2\cdot 2 + 2b_3\cdot 3 + 2b_4\cdot 4,$$
which implies
$$\chi(4k - 6) \nequiv -\chi(s) \pmod{4}.$$
So $\chi(4k - 6) \nequiv \pm \chi(s) \pmod{4}.$. Since $\chi(s)$ was assumed to be odd, this means that $\chi(4k - 6)$ must be even, a contradiction. Therefore for all even integers $s$ which are less than $k$, $\chi(s)$ is also even. 

Now suppose $t$ is an odd integer less than $k$, and $\chi(t)$ is odd. Note $t \ge 5$. Then there exist integers $a_i$ such that
$$4k - 6 = 3\cdot t + 1 + 2 + 2a_1 \cdot 1 + 2a_2\cdot 2 + 2a_3 \cdot 3 + 2a_4 \cdot 4,$$
which implies that
$$\chi(4k - 6) \nequiv \chi(t) - \chi(1) + 2\pmod{4}.$$
Similarly, there exist integers $b_i$ such that
$$4k - 6 = t + 1 + 2 +2b_1 \cdot 1 + 2b_2\cdot 2 + 2b_3 \cdot 3 + 2b_4 \cdot 4.$$
Hence,
$$\chi(4k - 6)\nequiv -\chi(t) - \chi(1) + 2 \pmod{4}.$$
Since $\chi(1)$ is assumed to be even, this shows that $\chi(4k - 6)$ cannot be odd, which is a contradiction.
Thus $\chi(s)$ is even for all integers $s$ less than $k$.

Now suppose $s$ is even, $k < s < 2k - 2$, and $\chi(s)$ is odd. Recall that $\chi(k) \equiv 0$. Let $\sigma = s + k + (k - 4)\cdot 1  + 2$.
Note that $3k - 2 < \sigma < 4k - 4$, and $\sigma$ is even.
We have 
$$s + \left(\frac{k}{2} - 2\right)\cdot 1 + \left(\frac{k}{2}\right) \cdot 3 = \sigma.$$
Therefore
$$\chi(s) + \left(\frac{k}{2} - 2\right)\chi(1) + \left(\frac{k}{2}\right)\chi(3) + \chi(\sigma) \nequiv 0 \pmod{4}.$$
The above expression can be simplified to $\chi(\sigma)\nequiv -\chi(s) \pmod{4}.$
On the other hand, $\sigma = s + k + (k - 4)\cdot 1  + 2$ implies that
$$\chi(s) + \chi(k)+ (k - 4)\chi(1) + \chi(2) + \chi(\sigma) \nequiv 0 \pmod{4},$$
which simplifies to $\chi(\sigma) \nequiv -\chi(s) - \chi(k)$.
But $\chi(k) \equiv 2 $ and $\chi(sigma)$ is odd, so this is impossible.
Similarly, if $s$ is odd, $k < s < 2k - 2$, and $\chi(s)$ is odd, we can define $\sigma = s + k + (k - 3) \cdot 1$.
Then $\sigma$ is even and $3k - 3 < \sigma < 4k - 5$.
We have
$$s + \left(\frac{k}{2} - 1\right)\cdot 1 + \left(\frac{k}{2} - 2\right) \cdot 3 + 2\cdot 2 = \sigma,$$
so 
$$\chi(s) +  \left(\frac{k}{2} - 1\right)\chi(1) + \left(\frac{k}{2} - 2\right)\chi(3) + 2\chi(2) + \chi(\sigma) \nequiv 0 \pmod{4}.$$
Since $\chi(1) \equiv \chi(3),$ this simplifies to $\chi(\sigma) \nequiv -\chi(s) + \chi(1) + \pmod{4}.$
On the other hand, we can write
$$s + k + (k - 3)\cdot 1 = \sigma,$$
so 
$$\chi(s) + \chi(k) + (k-3)\chi(1) + \chi(\sigma) \nequiv 0 \pmod{4}.$$
This simplifies to $\chi(\sigma) \nequiv -\chi(s) + \chi(1) + 2 \pmod{4}.$
This is a contradiction since $\chi(\sigma)$ and $\chi(s)$ are both odd, but $\chi(1)$ is even.
Thus $\chi(s)$ is even for $s \le 2k - 3$.
But $\sz{k}{2} = 2k - 3$ (\cite{robertson}, Proposition 3), so there is a $4$-zero-sum solution to $\mathcal{E}$ in the first $2k - 3$ values. This contradicts our assumption that $\chi(1)$ is even, and therefore we may now assume that $\chi(1)$ is odd. Since $-1$ is relatively prime to $4$, zero-sum solutions to $\mathcal{E}$ are invariant under multiplication of $\chi$ by $-1$. So, without loss of generality, we can assume that $\chi(1) \equiv 1$ Then $\chi(3) = 3$. By the same reasoning which showed $\chi(1) + \chi(3) - 2\chi(2)$ to be even, 
we can show that $\chi(2) + \chi(4) - 2\chi(3)$ is even.
For integers $s$ between 0 and 3 we have
\begin{align*}
&(k-1 - 2s)\cdot 3 + s \cdot 2 + s \cdot 4  = 3k - 3 \\
&\implies (k-1-2s)\chi(3) + s(\chi(2) + \chi(4)) + \chi(3k - 3) \nequiv 0 \pmod{4}\\
&\implies \chi(3k-3) \nequiv -(k-1)\chi(3) - s(\chi(2) + \chi(4) - 2\chi(3)) \pmod{4}\\
&\implies \chi(3k-3) \nequiv \chi(3) - s(\chi(2) + \chi(4) - 2\chi(3)) \pmod{4}.
\end{align*}
If $\chi(2) + \chi(4) - 2\chi(3)$ were odd, then no value could be assigned to $\chi(3k - 3)$. Therefore $\chi(2) + \chi(4) - 2\chi(3)$ is even, and hence $\chi(4)$ is even.

Suppose $\chi(4) \equiv 2$.
Then for $s \in \{0,1\}$ we have 
\begin{align*}
&s \cdot 2 + (2 - 2s) \cdot 3 + (s + k - 3) \cdot 4 = 4k - 6 \\ 
&\implies s\chi(2) + (2-2s)\chi(3) + (s - 3)\chi(4) + \chi(4k - 6) \nequiv 0 \pmod{4}\\
&\implies \chi(4k - 6) \nequiv 2s \pmod{4}.
\end{align*}
Therefore $\chi(4k - 6)$ is odd. We also have
\begin{align*}
&2s \cdot 1 + (k - 1 - 4s) \cdot 2 + s\cdot 2 + s\cdot 4 = 2k - 2 \\
&\implies \chi(2k-2) \nequiv 2s + 2 \pmod{4}.
\end{align*}
So $\chi(2k - 2)$ is odd as well.
But then
\begin{align*}
&(2k - 2) + 2s\cdot 1 + (k-2 - 3s)\cdot 2 + s\cdot 4 = 4k - 6 \\
&\implies \chi(2k - 2) + \chi(4k - 6) + 2s \nequiv 0 \pmod{4}.
\end{align*}
Since the left hand side is always even and $s$ ranges from 0 to 1, this is a contradiction. Therefore $\chi(4) \equiv 0 \pmod{4}$.

We have now shown that $\chi(1), \chi(2), \chi(3),$ and $\chi(4)$ are all distinct, and furthermore that without loss of generality $\chi(\alpha) \equiv \alpha \pmod{4}$ for $\alpha \le 4$. The remainder of the proof will derive a contradiction in this case. We willuse the following notation:
\begin{definition}
Let $n(\star)$ denote a sequence of 1's, 2's, 3's, and 4's summing to $n$. Furthermore, when such a \ssq appears in an expression of $\mathcal{E}$, the length of the \ssq will be exactly long enough that the equation $\mathcal{E}$ has $k-1$ terms on the left hand side.
\end{definition}
Note that the sum of the colors of the \ssq is congruent $n \pmod{4}$, and also that the sum of the \ssq is at least its length and at most 4 times its length.

Now, for all $m \ge k - 1$, we have 
$$m = m(\star).$$
We need to have $m \ge k - 1$ for this because the sequence has length $k - 1$. This implies that $\chi(m) \nequiv -m \pmod{4}$.
Similarly, for $m \ge 2k - 2$ we have
\begin{align*}
&m = k + (m-k)(\star) \\
&\implies \chi(m) \nequiv -m - \chi(k) \pmod{4}.
\end{align*}
For $m \ge 3k - 3$ we have 
\begin{align*}
&m = 2 \cdot k + (m - 2k)(\star) \\
&\implies \chi(m) \nequiv -m - 2\chi(k) \pmod{4}.
\end{align*}
Suppose that $\chi(k)$ is even. Then since we know $\chi(k) \nequiv -k \pmod{4}$, we must have $\chi(k) \equiv 2 \pmod{4}$.
So, for $s \in \{0,1\}$, since $2k - 3 - sk \ge k - 2 - s$ we have
\begin{align*}
&(2k - 2) + s \cdot k + (2k - 3 - sk)(\star) = 4k - 5 \\
&\implies \chi(4k - 5) \nequiv -\chi(2k - 2) + 2s + 3 \pmod{4}. 
\end{align*}
From the above results we know that $\chi(2k - 2) \nequiv 2, 2 - \chi(k) \pmod{4}$, so $\chi(2k - 2)$ is odd. Similarly, $\chi(4k - 5) \nequiv 1, 1 - \chi(k) \pmod{4}$, so $\chi(4k - 5)$ must be is even. But that is a contradiction, since the previous calculation shows that $\chi(4k - 5)$ is odd.
Therefore $\chi(k)$ is odd.
For $m \ge 3k - 3$ we have $\chi(m) \nequiv -m - s\chi(k) \pmod{4}$, where $s \in \{0,1,2\}$. Since $\chi(k)$ is odd, this implies that $\chi(m) \equiv -m + \chi(k) \pmod{4}$ for $m \ge 3k - 3$.

For $2k - 2 \le m \le 3k - 4$ we can compute
\begin{align*}
&m + (4k - 6 - m)(\star) = 4k - 6 \\
&\implies \chi(m) - m + 2 + \chi(4k - 6) \nequiv 0 \pmod{4}\\
&\implies \chi(m) \nequiv m - \chi(k) \pmod{4}.
\end{align*}
 For $2k - 2 \le m \le 3k - 5$ we have
 \begin{align*}
 &m + (4k - 7 -m)(\star) = 4k - 7 \\
 &\implies \chi(m) - m + 1 + \chi(4k - 7) \nequiv 0 \pmod{4}\\
 &\implies \chi(m) \nequiv m + 3  + (3 - \chi(k)) \pmod{4}\\
 &\implies \chi(m) \nequiv m + 2 - \chi(k) \pmod{4}.
 \end{align*}
 The following tables prove that for $2k -2 \le m \le 3k - 4$, $\chi(m) \equiv -m + 2 \pmod{4}$ (note that for $m=3k - 4$ we do not need the last relation):
 
$\chi(k) \equiv 1$:
 \begin{tabular}{c||c|c|c|c}
 $m$ & $-m$ & $-m - \chi(k)$ & $m - \chi(k)$ & $m + 2 - \chi(k)$ \\ \hline \hline
 0 & 0 & 3 & 3 & 1 \\ \hline 
 1 & 3 & 2 & 0 & 2 \\ \hline
 2 & 2 & 1 & 1 & 3 \\ \hline
 3 & 1 & 0 & 2 & 0 \\ 
 \end{tabular}
 \smallskip
 
 $\chi(k) \equiv 3$:
 \begin{tabular}{c||c|c|c|c}
 $m$ & $-m$ & $-m - \chi(k)$ & $m - \chi(k)$ & $m + 2 - \chi(k)$ \\ \hline \hline
 0 & 0 & 1 & 1 & 3 \\ \hline 
 1 & 3 & 0 & 2 & 0 \\ \hline
 2 & 2 & 3 & 3 & 1 \\ \hline
 3 & 1 & 2 & 0 & 2 \\ 
 \end{tabular}
 
 By process of elimination, we can see $\chi(m) \equiv -m + 2 \pmod{4}$ for $2k - 2 \le m \le 3k - 4$.
 Finally,
 \begin{align*}
 &(2k - 2) + (k) + (k-3)(\star) = 4k - 5 \\
 &\implies \chi(2k - 2) + \chi(k) + k - 3 + \chi(4k - 5) \nequiv 0 \pmod{4}\\
 &\implies 0 + \chi(k) + 1 + 1 + \chi(k) \nequiv 0 \pmod{4}\\
 &\implies 2(\chi(k) + 1) \nequiv 0 \pmod{4}.
 \end{align*} 
 Since $\chi(k)$ is odd, this is a contradiction.
\end{proof}

Finally, we present the upper bound for general $r$. The proof is similar to repeated application of the proof of Theorem~\ref{thm:uprime}.

\ucomp*

\begin{proof}
We will need the following lemma.
\begin{lemma}
\label{psum}
For positive integers $x_i$, $\sum_{i=1}^n (x_i - 1) \le \prod_{i=1}^n x_i$.
\end{lemma}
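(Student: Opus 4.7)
The plan is to prove this by a straightforward induction on $n$. The base case $n=1$ reduces to $x_1 - 1 \le x_1$, which is immediate. For the inductive step, assume the inequality holds with $n-1$ variables, and set $P = \prod_{i=1}^{n-1} x_i$ so that by the inductive hypothesis $\sum_{i=1}^{n-1}(x_i - 1) \le P$.

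Then
\[
\sum_{i=1}^n (x_i - 1) \;\le\; P + (x_n - 1),
\]
and the desired conclusion $\sum_{i=1}^n(x_i - 1) \le x_n P$ will follow once I verify the inequality $P + x_n - 1 \le x_n P$. Rearranging, this is equivalent to $(x_n - 1)(P - 1) \ge 0$, and both factors are nonnegative because each $x_i$ is a positive integer (so $x_n \ge 1$ and $P \ge 1$).

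There is really no substantive obstacle here: the entire argument is a one-line telescoping induction, and the only thing being used about the $x_i$ is that they are at least $1$. I would present the proof in just a few lines, with the key identity being the factorization $(x_n - 1)(P - 1) \ge 0$ that turns the inductive step into a triviality.
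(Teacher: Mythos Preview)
Your proof is correct and follows the same approach as the paper: an induction on $n$, with the inductive step reducing to the observation that $(x_n-1)(P-1)\ge 0$ (the paper phrases this as $(a-1)(b-1)\ge -1 \Rightarrow a+b-2\le ab$ for positive integers $a,b$, which is the same idea).
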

\begin{proof}
Note for positive integers $a,b$ we have $(a-1)(b-1) \ge -1 \implies a + b - 2 \le ab$. The result follows by induction.
\end{proof}
Suppose for the sake of contradiction that there is an $r$-coloring $\chi$ of the first $kr - \sum_{i=0}^{t-1}(p_i - 1) - 1$ natural numbers which does not admit an $r$-zero-sum solution to $\mathcal{E}$. By the lemma, $kr - \sum_{i=0}^{t-1}(p_i - 1) - 1 \ge kr - r$, and for the first part of the proof we will only need to use numbers up to $kr - r$. 

Since we can translate $\chi$ without affecting zero-sum sequence of length $k$, assume without loss of generality that $\chi(2) \equiv 2\chi(1) \pmod{r}$. 
First we will show that $\chi(\alpha) \equiv \alpha\chi(1) \pmod{r}$ for all $\alpha \le \lfloor \frac{3r}{8} \rfloor + 1$.
To that end, let $\alpha$ be some element of $\{2,3,\dots, \lfloor\frac{3r}{8}\rfloor \}$. We will show that $\chi(\alpha - 1) + \chi(\alpha + 1) - 2\chi(\alpha) \equiv 0 \pmod{r}$. With a simple induction argument, this implies the above claim.

Let $\nu_0 = r$, and suppose that $\chi(\alpha-1) + \chi(\alpha + 1) - 2\chi(\alpha) \nequiv 0 \pmod{r}$. Then let $\lambda_0$ be the least positive integer such that $\lambda_0(\chi(\alpha-1) + \chi(\alpha + 1) - 2\chi(\alpha) ) \equiv 0 \pmod{r}$, and let $\nu_1 = \nu_0 / \lamda_0$. 
We will use a slightly different definition of a \ssq than in the previous proof.
\begin{definition}
Let $n(\star)$ be a sequence summing to $n$ whose colors sum to $n\chi(1) \pmod{\nu_1}$. As before, when a \ssq appears in an expression of $\mathcal{E}$, its length will be such that the left hand side of the equation has exactly $k - 1$ terms. We will emphasize that the sum of the colors of the \ssq is congruent to $n\chi(1)$ mod $\nu_1$ by calling it a \emph{valid} \ssq.
\end{definition}

\begin{lemma}
\label{sl1}
If a $\star$-sequence has length $z$, where $z \ge 2\nu_1 - 1$ elements, then its sum can take on any value greater than or equal to $z$.
\end{lemma}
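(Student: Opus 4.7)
The plan is to exploit the fact that $\chi$ is linear modulo $\nu_1$ on $\{1, 2, \ldots, \alpha+1\}$. For $\beta \le \alpha$ this is the inductive hypothesis of the outer proof, which gives $\chi(\beta) \equiv \beta\chi(1) \pmod{r}$ and hence $\pmod{\nu_1}$. For $\beta = \alpha+1$ it follows from the relation $\chi(\alpha-1) + \chi(\alpha+1) \equiv 2\chi(\alpha) \pmod{\nu_1}$, which is exactly the content of $\lambda_0(\chi(\alpha-1)+\chi(\alpha+1)-2\chi(\alpha)) \equiv 0 \pmod{r}$ together with $\nu_1 = r/\lambda_0$. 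Consequently, any length-$z$ sequence with entries drawn from $\{1, \ldots, \alpha+1\}$ has color sum $\sum \chi(a_i) \equiv \chi(1)\sum a_i = n\chi(1) \pmod{\nu_1}$, so it is automatically a valid \ssq.

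For a target sum $n$ in the range $[z, z(\alpha+1)]$, I would build such a sequence greedily: start with $z$ copies of $1$ (summing to $z$) and repeatedly increment some entry $a < \alpha+1$ to $a+1$. Each step raises the sum by exactly one while keeping every entry inside $\{1, \ldots, \alpha+1\}$, so every integer sum from $z$ up to $z(\alpha+1)$ is realized, and validity is preserved throughout.

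For $n > z(\alpha+1)$ one must use entries $\beta > \alpha+1$ whose colors need not satisfy $\chi(\beta) \equiv \beta\chi(1) \pmod{\nu_1}$, and this is where the hypothesis $z \ge 2\nu_1 - 1$ is invoked. The idea is to introduce a small number of large entries to push the sum past $z(\alpha+1)$ and then correct the resulting color deficit modulo $\nu_1$ using the many remaining small entries, via sum-preserving swaps that involve the large entries. The main obstacle is that swaps confined to $\{1, \ldots, \alpha+1\}$ already preserve the color sum modulo $\nu_1$, so the correction must actively involve the entries with nonzero color residue; the bound $2\nu_1 - 1$ coincides exactly with the Erd\H{o}s--Ginzburg--Ziv threshold for $\mathbb{Z}/\nu_1$, which strongly suggests that an EGZ-style pigeonhole on partial color sums (guaranteeing a subset of the small entries whose residues cancel any specified deficit mod $\nu_1$) is the intended mechanism for producing the necessary zero-sum correction.
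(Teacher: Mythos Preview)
Your treatment of the case $n \le z(\alpha+1)$ is correct, assuming the outer argument is structured as an induction on $\alpha$ so that $\chi(\beta)\equiv\beta\chi(1)\pmod{r}$ for $\beta\le\alpha$ is available; together with $\chi(\alpha+1)\equiv(\alpha+1)\chi(1)\pmod{\nu_1}$ from the definition of $\lambda_0$, any length-$z$ sequence in $\{1,\ldots,\alpha+1\}$ is automatically valid, and the greedy increment realizes every sum in $[z,z(\alpha+1)]$.

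The case $n>z(\alpha+1)$, however, is not proved. You correctly identify the obstacle---swaps among small entries cannot move the color sum modulo $\nu_1$---but then only gesture at Erd\H{o}s--Ginzburg--Ziv. EGZ guarantees a size-$\nu_1$ zero-sum subset among $2\nu_1-1$ residues; it does \emph{not} let you hit a prescribed nonzero target in $\mathbb{Z}/\nu_1$, which is what ``correcting the deficit'' would require. Nothing in your sketch explains how to manufacture a sequence of the right length, the right sum, and the right color sum once large entries with unknown colors enter.

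The paper's argument sidesteps all of this with a single observation you are missing: for \emph{any} integer $y$, a block of $\nu_1$ identical copies of $y$ contributes $\nu_1\chi(y)\equiv 0\pmod{\nu_1}$ to the color sum, regardless of $\chi(y)$. So one writes
\[
n=(z-2\nu_1+1)\cdot 1+\nu_1\cdot y+p\cdot 2+(\nu_1-1-p)\cdot 1,
\]
choosing $y\ge1$ and $0\le p<\nu_1$ by the division algorithm. Only $\chi(2)\equiv 2\chi(1)$ is used---no inductive hypothesis on $\alpha$---and the bound $z\ge 2\nu_1-1$ arises not from EGZ but simply as $\nu_1$ (the block of $y$'s) plus $\nu_1-1$ (the $1$/$2$ adjustment tail) needed to make $z-2\nu_1+1\ge 0$.
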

\begin{proof}
Suppose $n \ge z$ and write $n = (z - 2\nu_1 + 1)\cdot 1 + \nu_1\cdot y + p \cdot 2 + (\nu_1 - 1 - p)\cdot 1$, where $y \ge 1$ and $p < \nu_1$. We can compute 
\begin{align*}
&(z - 2\nu_1 + 1)\chi(1) + \nu_1\chi(y) + p\chi(2) + (\nu_1 - 1 - p)\chi(1)  \\
&\equiv (z - 2\nu_1 + 1)\chi(1) + \nu_1y\chi(1) + 2p\chi(1) + (\nu_1 - 1 - p)\chi(1)  \pmod{\nu_1} \\
&\equiv n\chi(1) \pmod{\nu_1}.
\end{align*}
\end{proof}

For $m \ge k - 1 + 2(\lambda_0 - 1)(\alpha - 1)$, and for $0 \le s_0 \le \lambda_0 - 1$, we can write
\begin{align*}
&m = (2\lambda_0 - 2 - 2s_0)\cdot \alpha + s_0\cdot (\alpha-1) + s_0 \cdot(\alpha + 1) + (m - 2\alpha\lambda_0 + 2\alpha)(\star) \\
&\implies \chi(m) + m\chi(1) + s_0(\chi(\alpha - 1) + \chi(\alpha + 1) - 2\chi(\alpha)) \nequiv 0 \pmod{\nu_0} \\
&\implies \chi(m) + m\chi(1) \nequiv 0 \pmod{\nu_1}.
\end{align*}
If $\nu_1 = 1$, we are done, since no value can be assigned to $\chi(m)$, and it is easy to check that 
\begin{align*}
k - 1 + 2(\lambda_0 - 1)(\alpha - 1) &= k - 1 + 2(r - 1)(\alpha - 1) \\
&\le k - 1 + 2(r-1)\left(\frac{3r}{8} - 1\right) \\
&= k - 1 + \frac{3r^2}{4} - 4r - \frac{3r}{4} + 4 \\
&\le \left(\frac{3}{8} + \frac{1}{r}\right)kr - \frac{11r}{4} + 3 \\
&\le kr - r,
\end{align*}
Also, if we choose the minimal $m$, we can choose $\star$-sequence to be all 1's, so we can indeed find a valid $\star$-sequence.
On the other hand, if $\nu_1 \ne 1$, let $h_1$ be the least such $m$ such that $h_1 \equiv -h_1 \pmod{\nu_1}$. Thus $h_1 \le k - 1+ 2(\lambda_0 - 1)(\alpha - 1) + \nu_1 - 1$.
Then $\chi(h_1) - h_1\chi(1) \nequiv 0 \pmod{\nu_1}$, and let $\lambda_1$ be the least positive integer such that $\lambda_1(\chi(h_1) - h_1\chi(1)) \equiv 0 \pmod{\nu_1}$. 

Suppose that for all $j < i$, $\lambda_j, \nu_j,$ and $h_j$ are defined. We will define $\lambda_i, \nu_i$, and $h_i$. First, let $\nu_i = \nu_{i-1}/\lambda_{i-1}$.
Now, for 
\begin{align*}
m &\ge \left(\sum_{j=1}^{i-1} (\lambda_j - 1)h_j\right) + (2\lambda_0 - 2)(\alpha) + \left(k - 1 - 2\lambda_0 + 2 - \left(\sum_{j=1}^{i-1}(\lambda_j - 1)\right)\right) \\
&= \left(\sum_{j=1}^{i-1} (\lambda_j - 1)(h_j - 1)\right) + k - 1 + 2(\lambda_0 - 1)(\alpha - 1),
\end{align*}
 we have
\begin{align*}
&m = \left(\sum_{j = 1}^{i-1} s_j\cdot h_j \right) + (2\lambda_0 - 2 - 2s_0) \cdot \alpha + s_0\cdot (\alpha - 1) + s_0\cdot (\alpha + 1) + \left(m - 2\lambda_0\alpha + 2\alpha - \left(\sum_{j = 1}^{i-1} s_jh_j \right)\right)(\star) \\
&\implies \chi(m) + m\chi(1) + \left(\sum_{j = 1}^{i-1} s_j(\chi(h_j) -h_j\chi(1) \right) + s_0(\chi(\alpha - 1) + \chi(\alpha + 1) - 2\chi(\alpha)) \nequiv 0 \pmod{\nu_0} \\
&\implies \chi(m) + m\chi(1) + \left(\sum_{j = 1}^{i-1} s_j(\chi(h_j) -h_j\chi(1) \right) \nequiv 0 \pmod{\nu_1}.
\end{align*}
If we let the $s_j$ vary from $0$ to $\lambda_j - 1$ for each $j$, this implies that $\chi(m) + m\chi(1) \nequiv 0 \pmod{\nu_i}$.
Let $h_i$ be the least such $m$ satisfying $h_i \equiv -h_i \pmod{\nu_i}$. Then $h_i \le \left(\sum_{j=1}^{i-1} (\lambda_j - 1)(h_j - 1)\right) + k - 1 + 2(\lambda_0-1)(\alpha - 1) + \nu_i - 1$, and we have $\chi(h_i) - h_i\chi(1) \nequiv 0 \pmod{\nu_i}$. Let $\lambda_i$ be the least positive integer such that $\lambda_i(\chi(h_i) - h_i\chi(1)) \equiv 0 \pmod{\nu_i}$.

Note that $\nu_i < \nu_{i-1}$ and $\nu_i \mid \nu_{i-1}$, so eventually we will reach an index $t$ such that $\nu_t = 1$, and this process will terminate.
Then, for
\begin{align*}
m &\ge \left(\sum_{j=1}^{t-1} (\lambda_j - 1)h_j\right) + (2\lambda_0 - 2)(\alpha) + \left(k - 1 - 2\lambda_0 + 2 - \left(\sum_{j=1}^{t-1}(\lambda_j - 1)\right)\right) \\
&= \left(\sum_{j=1}^{t-1} (\lambda_j - 1)(h_j - 1)\right) + k - 1 + 2(\lambda_0 - 1)(\alpha - 1),
\end{align*}
we have 
\begin{align*}
&m = \left(\sum_{j = 1}^{t-1} s_j \cdot h_j \right) + (2\lambda_0 - 2 - 2s_0)\cdot \alpha + s_0\cdot(\alpha - 1) + s_0\cdot(\alpha + 1) + \left(m - 2\lambda_0\alpha + 2\alpha - \left(\sum_{j = 1}^{t-1} s_jh_j \right)\right)(\star) \\
&\implies \chi(m) + m\chi(1) + \left(\sum_{j = 1}^{t-1} s_j(\chi(h_j) -h_j\chi(1) \right) + s_0(\chi(\alpha-1) + \chi(\alpha+1) - 2\chi(\alpha)) \nequiv 0 \pmod{\nu_0} \\
&\implies \chi(m) + m\chi(1) + \left(\sum_{j = 1}^{t-1} s_j(\chi(h_j) -h_j\chi(1) \right) \nequiv 0 \pmod{\nu_1}.
\end{align*}
If we let each $s_j$ vary from $0$ to $\lambda_j - 1$, this implies that $\chi(m) + m\chi(1) \nequiv 0 \pmod{\nu_t}$, which is impossible since $\nu_t = 1$.

It is necessary to check that the $\star$-sequences are sufficiently large to apply Lemma~\ref{sl1} and that the minimal $m$ are sufficiently small (smaller than $kr - r$). The worst case for both of these is the index $t$ case with all $s_j$ maximal, so it will suffice to check only that case. First we show that the $\star$-sequence has at least $2\nu_1 - 1$ elements. Recall that we have already dealt with the case of $t = 1$, so assume that $\nu_1 > 1$ and $\lambda_0 > 1$.

Now,
\begin{align*}
&(\nu_1 - 1)\left(\lambda_0 - \frac{3}{2}\right) \ge \frac{1}{2} \\
&\implies \nu_1\left(\lambda_0 - \frac{3}{2}\right) \ge \lambda_0 - 1 \\
&\implies \nu_1\lambda_0 - \lambda_0 - \frac{3}{2}\nu_1 + 1 \ge 0 \\
&\implies 2\nu_1\lambda_0 - 2\lambda_0 + 2 - 3\nu_1 \ge 0 \\
&\implies 2r - 2\lambda_0 + 2 - 3\nu_1 \ge 0 \\
&\implies k - 1 - 2\lambda_0 + 3 - 3\nu_1 \ge 0 \\
&\implies k - 1 - 2\lambda_0 + 2 - \nu_1 \ge 2\nu_1 - 1 \\
&\implies k - 1 - 2\lambda_0 + 2 - \prod_{j=1}^{t-1}\lambda_j \ge 2\nu_1 - 1 \\
&\implies k - 1 - 2\lambda_0 + 2 - \sum_{j=1}^{t-1}(\lambda_j - 1) \ge 2\nu_1 - 1,
\end{align*}
which shows that the $\star$-sequence is long enough to apply Lemma~\ref{sl1}.

It remains to show that 
$$\left(\sum_{j=1}^{t-1} (\lambda_j - 1)(h_j - 1)\right) + k - 1 + 2(\lambda_0 - 1)(\alpha - 1) \le kr - r.$$
To that end, let $k = Ar$ and let 
$$V_i = \left(\sum_{j=1}^{i-1} (\lambda_j - 1)(h_j - 1)\right) + k - 1 + 2(\lambda_0 - 1)(\alpha - 1).$$
We can compute
\begin{align*}
V_{i+1} &= V_i + (\lambda_i - 1)(h_i - 1) \\
&\le V_i + (\lambda_i - 1)\left(\left(\sum_{j=1}^{i-1}(\lambda_j-1)(h_j -1)\right) + k - 1 + 2(\lambda_0 - 1)(\alpha - 1) + \nu_i - 2 \right) \\
&= V_i + (\lambda_i - 1)(V_i + \nu_i - 2) \\
&= \lambda_iV_i + (\lambda_i\nu_i - \nu_i - 2\lambda_i + 2).
\end{align*}

Now, since $\nu_i = \nu_{i-1} / \lambda_{i-1}$, we can check that $\prod_{j=q}^{t-1}\lambda_j = \nu_q$. Thus we have
\begin{align*}
V_t &\le V_1\nu_1 + \sum_{j=1}^{t-1}\nu_{j+1}(\lambda_j\nu_j - \nu_j - 2\lambda_j + 2) \\
&= V_1\nu_1 + \sum_{j=1}^{t-1}(\nu_j^2 - \nu_j\nu_{j+1} - 2\nu_j + 2\nu_{j+1}) \\
&\le V_1\nu_1 + \sum_{j=1}^{t-1}(\nu_j^2 - \nu_{j+1}^2 - 2\nu_j + 2\nu_{j+1}) \\
&\le V_1\nu_1 + \nu_1^2 - 1 + 2 - 2\nu_1 \\
&= k\nu_1 - \nu_1 + 2\lambda_0\alpha\nu_1 - 2\alpha\nu_1 - 2\lambda_0\nu_1 + 2\nu_1 + (\nu_1 - 1)^2 \\
&= Ar\nu_1 + \nu_1 + 2\alpha r - 2\alpha\nu_1 - 2r + (\nu_1 - 1)^2 \\
&= r(A\nu_1 + 2\alpha - 2) - 2\alpha\nu_1 + \nu_1 + (\nu_1 - 1)^2 \\
&\le r\left(\frac{Ar}{2} + 2\alpha - 2\right) - 2\alpha + \frac{r}{2} + \frac{r^2}{4} - r + 1 \\
&= \left(\frac{2A + 1}{4}\right)r^2 + \left(2\alpha - \frac{5}{2}\right)r - 2\alpha + 1.
\end{align*}
Finally, since $\alpha \le \frac{3r}{8}$ and $A \ge 2$ we have
\begin{align*}
\frac{3}{4}r &\ge 2\alpha - \frac{3}{2} \\
 \protect \frac{2A - 1}{4}r &\ge 2\alpha - \frac{3}{2} \\
 \protect \left(\frac{2A - 1}{4}\right)r^2 &\ge \left(2\alpha - \frac{3}{2}\right)r - 2\alpha + 1 \\
 Ar^2 - r &\ge \left(\frac{2A + 1}{4}\right)r^2 + \left(2\alpha - \frac{5}{2}\right)r - 2\alpha + 1,
\end{align*}
as desired.
Thus $\chi(\alpha - 1) + \chi(\alpha + 1) - 2\chi(\alpha) \equiv 0 \pmod{r}$ for $\alpha \le \lfloor \frac{3r}{8} \rfloor$, and by induction $\chi(\alpha) - \alpha\chi(1) \equiv 0 \pmod{r}$ for $\alpha \le \lfloor \frac{3r}{8} \rfloor + 1$.

For the second part of the proof, we will also need the following four special cases:
\begin{description}
\item[$\mathbf{A = 2, r = 9, \alpha = 4:}$]
In this case we have $Ar^2 - r  = 153$, and  $\left(\frac{2A + 1}{4}\right)r^2 + \left(2\alpha - \frac{5}{2}\right)r - 2\alpha + 1 = 101.25 \le 153$. So $\chi(5) \equiv 5\chi(1) \pmod{r}$ also.

\item[$\mathbf{A = 2, r = 10, \alpha = 4:}$]
In this case we have $Ar^2 - r = 190$, and $\left(\frac{2A + 1}{4}\right)r^2 + \left(2\alpha - \frac{5}{2}\right)r - 2\alpha + 1 = 121 \le 190$. So $\chi(5) \equiv 5\chi(1) \pmod{r}$ also.

\item[$\mathbf{A = 2, r = 6, \alpha = 3:}$]
In this case we have $Ar^2 - r = 66$, and $\left(\frac{2A + 1}{4}\right)r^2 + \left(2\alpha - \frac{5}{2}\right)r - 2\alpha + 1 = 43 \le 66$. So $\chi(4) \equiv 4\chi(1) \pmod{r}$ also.

\item[$\mathbf{A = 2, r = 8, \alpha = 4:}$]
In this case we have $Ar^2 - r = 120$, and $\left(\frac{2A + 1}{4}\right)r^2 + \left(2\alpha - \frac{5}{2}\right)r - 2\alpha + 1 = 85 \le 120$. So $\chi(5) \equiv 5\chi(1) \pmod{r}$ also.
\end{description}

Now we will show that $\chi(k) - k\chi(1) \equiv 0 \pmod{r}$. For the remainder of the proof, we will use a slightly different definition of a $\star$-sequence:

\begin{definition}
Let $n(\star)$ be a sequence summing to $n$ whose colors sum to $n\chi(1) \pmod{r}$. As before, when a \ssq appears in an expression of $\mathcal{E}$, its length will be such that the left hand side of the equation has exactly $k - 1$ terms. We will again emphasize that the sum of the colors of the \ssq is congruent to $n\chi(1)$ mod $r$ by calling it a \emph{valid} \ssq.
\end{definition}

\begin{lemma}
\label{sl2}
If a $\star$-sequence has $z \ge r + 4$ elements in it, it can take on any value greater than or equal to $z$.
\end{lemma}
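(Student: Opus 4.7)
The approach parallels the argument of Lemma~\ref{sl1}, working modulo $r$ instead of modulo $\nu_1$ and leveraging the identity $\chi(\alpha) \equiv \alpha\chi(1) \pmod{r}$ just established for $\alpha \le \alpha_{\max} := \lfloor 3r/8 \rfloor + 1$ (extended by the four special cases to $\alpha \le 4$ when $r = 6$ and $\alpha \le 5$ when $r \in \{8, 9, 10\}$). For any target sum $n \ge z$, the plan is to exhibit a sequence of length $z$ consisting of $z - r$ ``small'' entries drawn from $\{1, 2, \ldots, \alpha_{\max}\}$ together with $r$ identical copies of a single ``large'' entry $y$. Since $r\chi(y) \equiv 0 \pmod{r}$ for every $y$, and every small entry $i$ contributes exactly $i\chi(1)$ to the color sum, the total color sum automatically equals $(n - ry)\chi(1) \equiv n\chi(1) \pmod{r}$, so the sequence is valid irrespective of how $y$ and the small entries are chosen.

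What remains is matching the sum and length constraints. Concretely, I would choose $y \ge 1$ and nonnegative integers $a_1, \ldots, a_{\alpha_{\max}}$ with $\sum_{i} a_i = z - r$ and $\sum_{i} i \cdot a_i = n - ry$. The second equation can be solved whenever $n - ry \in [z - r,\, \alpha_{\max}(z - r)]$, for instance by starting with $z - r$ copies of $1$ and repeatedly upgrading one $1$ to a larger entry to raise the total by one unit at a time. Thus it suffices to exhibit some positive integer $y$ satisfying this membership condition. As $y$ ranges over $\{1, 2, 3, \ldots\}$, the targets $n - ry$ form an arithmetic progression with common difference $-r$, and the window has length $(\alpha_{\max} - 1)(z - r)$, so consecutive windows cover $[z, \infty)$ as long as $(\alpha_{\max} - 1)(z - r) \ge r$.

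The one place where care is required is verifying this width inequality across the full composite range $r \ge 6$ of Theorem~\ref{thm:ucomp}. For $r \ge 8$ one has $\alpha_{\max} - 1 \ge \lfloor 3r/8 \rfloor$, and a short computation shows $4 \lfloor 3r/8 \rfloor \ge r$, so together with $z - r \ge 4$ the product $(\alpha_{\max} - 1)(z - r)$ is already at least $r$; for the small composite cases $r \in \{6, 8, 9, 10\}$, the four special-cased values promote $\alpha_{\max} - 1$ to at least $3$, and $3 \cdot 4 = 12$ majorizes $r$ in each instance. This is the only real obstacle: the rest is mechanical bookkeeping on sums, lengths, and congruences, entirely analogous to Lemma~\ref{sl1}, and the careful handling of the endpoint $r$'s is precisely why the four special cases were isolated at the end of the first part of the proof.
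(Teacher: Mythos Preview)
Your argument is correct and follows essentially the same route as the paper: split the $z$ terms into $r$ copies of a single value $y$ (whose colors contribute $0 \pmod r$) plus $z-r \ge 4$ ``small'' entries drawn from $\{1,\dots,\lfloor 3r/8\rfloor+1\}$, and check that the window of attainable small sums is wide enough relative to $r$ so that some choice of $y \ge 1$ lands $n-ry$ in it. The paper phrases the width condition as $l\lfloor 3r/8\rfloor + 1 \ge r$ with $l=z-r\ge 4$, which is the same inequality as yours up to an off-by-one (you ask for width $\ge r$, the paper for $\ge r-1$; both hold).

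One remark: your claim that the four special cases were isolated ``precisely'' to make this lemma go through is not accurate. The inequality $4\lfloor 3r/8\rfloor \ge r$ already holds for every $r\ge 6$ (check $r=6,7$ directly: $4\cdot 2=8$), so Lemma~\ref{sl2} needs nothing beyond the generic bound $\alpha_{\max}=\lfloor 3r/8\rfloor+1$. The paper's proof of the lemma does not invoke the special cases at all; they are used later, in the Case~1 and Case~2 analysis where the $\star$-sequence may be shorter than $r+4$.
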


\begin{proof}
A sequence of $l$ elements of $\{1,2,\dots,\lfloor \frac{3}{8} \rfloor + 1\}$ can sum to any of $l\lfloor \frac{3}{8} \rfloor + 1$ distinct values. If $l\lfloor \frac{3}{8} \rfloor + 1 \ge r$, then this sum can attain any value modulo $r$. Since $r \ge 6$, if $l \ge 4$ we have 
\begin{align*}
l\left\lfloor \frac{3}{8} \right\rfloor + 1 &\ge 4\left(\frac{3r}{8} - 1\right) + 1 \\
&\ge \frac{3r}{2} - 3 \\
&\ge r + \frac{r}{2} - 3 \\
& \ge r,
\end{align*}
as desired. Then any value $n$ greater than the length $z$ of the \ssq can be expressed as $r(y)$ plus the sum of at least $4$ values from the set $\{1,2,\dots,\lfloor \frac{3}{8} \rfloor + 1\}$, where $y$ is some positive integer. Since $r\chi(y) \equiv ry\chi(1) \equiv 0 \pmod{r}$, the sum of the colors of this sequence is indeed $n \pmod{r}$.
\end{proof}

Let $h_0 = k$, $\nu_0 = r$, and suppose that $\chi(h_0) - h_0\chi(1) \nequiv 0 \pmod{\nu_0}$. Then let $\lambda_0$ be the least positive integer such that $\lambda_0(\chi(h_0) - h_0\chi(1)) \equiv 0 \pmod{\nu_0}$. 
Let $\nu_1 = \nu_0 / \lambda_0$. 
Now consider for some integers $m$, $s_0$:
\begin{align*}
&m = s_0 \cdot h_0 + (m - s_0h_0)(\star) \\
&\implies \chi(m) + m\chi(1) + s_0(\chi(h_0) - h_0\chi(1)) \nequiv 0 \pmod{\nu_0}. \\
\end{align*}
If $s_0$ ranges from $0$ to $\lambda_0 - 1$, this implies that 
$$\chi(m) + m\chi(1) \nequiv 0 \pmod{\nu_1}.$$
Furthermore, if $\nu_1 \mid m$, we have $\chi(m) + m\chi(1) \equiv \chi(m) - m\chi(1) \nequiv 0 \pmod{\nu_1}$.
The least value of $m$ for which we can allow $s_0$ to range from $0$ to $\lambda_0 - 1$ is 
$$m = (\lambda_0 - 1)h_0 + (k - \lambda_0) = (\lambda_0 - 1)(h_0 - 1) + k - 1.$$ 
Therefore there is some $y_1 \le \lambda_0 - 1$ such that if $h_1 = (\lambda_0 - 1)h_0 + (k - \lambda_0) + y_1$, then $\nu_1 \mid h_1$, and we have $\chi(h_1) - h_1\chi(1) \nequiv 0 \pmod{\nu_1}$.  Let $\lambda_1$ be the least positive integer such that $\lambda_1(\chi(h_1) - h_1\chi(1)) \equiv 0 \pmod{\nu_1}$.

Suppose that for $j < i$, $\lambda_j, \nu_j,$ and $h_j$ have already been defined, and that $\nu_{i-1} \ne 1$. We will define $\lambda_i, \nu_i,$ and $h_i$. First let $\nu_i = \nu_{i - 1} / \lambda_{i - 1}$. 
Then for some values of $m$, $s_j$ we have
\begin{align*}
&m = \left(\sum_{j=0}^{i-1} s_j \cdot h_j\right) + \left(m - \left(\sum_{j=0}^{i-1} s_jh_j\right)\right)(\star) \\
&\implies \chi(m) + m\chi(1) + \left( \sum_{j=0}^{i-1} s_j(\chi(h_j) - h_j\chi(1)) \right) \nequiv 0 \pmod{r}.
\end{align*}
If we allow the $s_j$ to range from $0$ to $\lambda_j - 1$, the expression $\left( \sum_{j=0}^{i-1} s_j(\chi(h_j) - h_j\chi(1)) \right)$ can take on the value of any multiple of $\nu_i \pmod{r}$. Thus we have $\chi(m) + m\chi(1) \nequiv 0 \pmod{\nu_i}$. 

The least value of $m$ for which we can produce a valid $\star$-sequence is 
$$m = \left(\sum_{j=0}^{i-1} (\lambda_j - 1)h_j \right) + \left(k - 1 - \sum_{j=0}^{i-1} (\lambda_j - 1)\right) = \left(\sum_{j=0}^{i-1} (\lambda_j - 1)(h_j - 1) \right) + k - 1.$$ 
(If $m$ is smaller than this, the length of the \ssq would be less than its sum.) Thus there is some value $y_i \le \nu_i - 1$, such that if $h_i = \left(\sum_{j=0}^{i-1} (\lambda_j - 1)(h_j - 1) \right) + k - 1 + y_i$, then $\nu_i \mid h_i$, and therefore $\chi(h_i) + h_i\chi(1) \equiv \chi(h_i) - h_i\chi(1) \nequiv 0 \pmod{\nu_i}$. Then let $\lambda_i$ be the least positive integer such that $\lambda_i(\chi(h_i) - h_i\chi(1)) \equiv 0 \pmod{\nu_i}$.

Note that $\nu_{i} \mid \nu_{i-1}$ and also $\nu_i < \nu_{i-1}$, so there is some index $t$ at which $\nu_t = 1$ and this process terminates. Then for some values of $m,s_j$ we can write
\begin{align*}
&m = \left(\sum_{j=0}^{t-1} s_j \cdot h_j\right) + \left(m - \left(\sum_{j=0}^{t-1} s_jh_j \right)\right)(\star) \\
&\implies \chi(m) + m\chi(1) +  \left(\sum_{j=0}^{t-1} s_j(\chi(h_j) - h_j\chi(1))\right) \nequiv 0 \pmod{r}. \\
\end{align*}
As before, if each $s_j$ is allowed to range from $0$ to $\lambda_j - 1$, this implies that $\chi(m) + m\chi(1) \nequiv 0 \pmod{\nu_t}$. But $\nu_t = 1$, so this is impossible. Let $m_t$ be the least value of the above for which it is possible to have a valid $\star$-sequence. We compute
\begin{align*}
m_t &= \left(\sum_{j=0}^{t-1}(\lambda_j - 1)h_j\right) + \left( k - 1 - \sum_{j=0}^{t-1} (\lambda_j - 1) \right) \\
&=  \left(\sum_{j=0}^{t-1}(\lambda_j - 1)(h_j - 1) \right) + k - 1.
\end{align*}
In order to finish the proof, we need to check that $m_t \le kr - \sum_{i=0}^{t-1}(p_i - 1) - 1$, and also check that we can apply Lemma~\ref{sl2} to produce a valid $\star$-sequence. To that end, let 
$$V_i = \left(\sum_{j=0}^{i-1}(\lambda_j - 1)(h_j - 1) \right) + k - 1.$$
We can compute
\begin{align*}
V_{i+1} &= V_i + (\lambda_i - 1)(h_i - 1) \\
&= V_i + (\lambda_i - 1)\left(\sum_{j=0}^{i-1} (\lambda_j - 1)(h_j - 1) + k -1 + y_i - 1 \right) \\
&= V_i + (\lambda_i - 1)(V_i + y_i - 1) \\
&= \lambda_iV_i + (\lamda_i - 1)(y_i - 1) \\
&= \lambda_iV_i + (\lambda_iy_i - y_i - \lambda_i + 1). 
\end{align*}
Since $\nu_i = \nu_{i-1}/\lambda_{i-1},$ we have $\prod_{j=q}^{t-1}\lambda_j = \nu_q$. Therefore
\begin{align*}
V_t &\le V_0\nu_0 + \sum_{j=1}^{t}\nu_j(\lambda_{j-1}y_{j-1} - y_{j-1} - \lamda_{j-1} + 1) \\
&= kr - r + \sum_{j=1}^{t} (\nu_j\lambda_{j-1}y_{j-1} - \nu_jy_{j-1} - \nu_j\lambda_{j-1} + \nu_j) \\
&= kr - r + \sum_{j=1}^{t} (\nu_{j-1}y_{j-1} - \nu_jy_{j-1} - \nu_{j-1} + \nu_j) \\
&= kr - r + \sum_{j=1}^{t} (\nu_{j-1}y_{j-1} - \nu_jy_{j-1}) - \nu_0 + \nu_t \\
&= kr - r + \sum_{j=1}^{t-1} (\nu_j(y_{j} - y_{j-1})) - y_{t-1} + 1.
\end{align*}
Now observe that $V_i + y_i = h_i$, and that 
$$h_i \equiv \lambda_{i-1}(h_{i-1} - y_{i-1}) + (\lambda_{i-1} - 1)(y_{i-1} - 1) + y_i \equiv y_i - y_{i-1} - \lambda_{i-1} + 1 \pmod{\nu_i}.$$
Therefore $y_i - y_{i-1} \equiv \lambda_{i-1} - 1 \pmod{\nu_i}$, and in particular $y_i - y_{i-1} \le \lambda_{i-1} - 1$, and $y_{t-1} \le 1 + \sum_{j=0}^{t-2} (\lambda_j - 1)$.
This allows us to finish the above calculation:
\begin{align*}
&= kr - r + \sum_{j=1}^{t-1} (\nu_j(y_{j} - y_{j-1})) - y_{t-1} + 1 \\
&\le kr - r + \sum_{j=1}^{t-1} (\nu_j(\lamba_{j-1} - 1)) - \sum_{j=0}^{t-2} (\lambda_j - 1)\\
&= kr - r + \sum_{j=1}^{t-1} (\nu_{j-1}- \nu_j) - \sum_{j=0}^{t-2} (\lambda_j - 1) \\
&= kr - \nu_{t-1}  - \sum_{j=0}^{t-2} (\lambda_j - 1) \\
&= kr -  \sum_{j=0}^{t-1} (\lambda_j - 1) - 1.
\end{align*}
By Lemma~\ref{psum}, this value is at most $kr - \sum_{i=0}^{t-1}(p_i - 1) - 1$, as desired.

We also must check that the length of the star sequence is always at least $r + 4$, so that we can apply Lemma~\ref{sl2} and show that there is a valid $\star$-sequence.
In the worst case, all of the $s_j$ attain their maximum value, $\lambda_j - 1$. Then the $\star$-sequence has length
\begin{align*}
&k - 1 - \sum_{j=0}^{t-1}(\lambda_j - 1) \\
\ge &2r - 1 - \sum_{j=0}^{t-1}(\lambda_j - 1)\nu.
\end{align*}
Thus we need to show
$$\sum_{j=0}^{t-1}(\lamba_j - 1) \le r - 5.$$
In fact, this is not always true. We divide into $2$ cases:
\begin{description}
   \item[Case 1: $t = 1$.]
   We must show that it is possible to write $m = s_0k + (m - s_0k)(\star)$ for $0 \le s_0 \le r - 1$ for some 
   $m \le kr - r$ (since $t = 1 \implies V_t = kr - r$). In fact, we will take $m$ to be $kr - r$. 
   First note that the value of the $\star$-sequence is always at least its length, since in the worst case 
   ($s_0 = \lambda_0 - 1$) the length of the \ssq is $k - r$, and its value is $kr - r - (kr - k)= k - r$.
   Suppose $k \ge 3r$. 
   Then the length of the $\star$-sequence is at least $2r \ge r+4$, so we can apply the lemma
   and the \ssq is always valid.
   On the other hand, if $k = 2r$, the star-sequence is not always long enough to apply the lemma. The length 
   of the \ssq is equal to $k - 1 - s_0 = 2r - 1 - s_0$, so when $r - 4 \le s_0 \le r - 1$ we must find a valid $\star$-sequence. 
   \begin{itemize}
   	\item The case $s_0 = r - 1$ is easy since we can take the sequence of all 1's.
	\item If $s_0 = r - 2$, we must find $r + 1$ elements summing to $2k - r = 3r$. Since $r \ge 6$, we have 
 	$3 \ge \lfloor \frac{3r}{8} 	\rfloor + 1$, so we can just take $r - 1$ 3's, one 2, and one 1.
 	 \item If $s_0 = r - 3$, we must find $r + 2$ elements summing to $3k - r = 5r$. If $r = 6$ or $8$, 
	 then note that $(\lfloor \frac{3}{8} 	\rfloor + 1)(r + 2) \ge 5r$, so this is possible. If $r \ge 12$, note that
	 $\left(\frac{3}{8}r\right)(r + 2) \ge 5r$, so this is again possible. We will ignore the cases $r = 7$ 
	 and $r = 11$ since they are solved by the previous theorem on primes. 
	 In the cases $r = 9, r=10$, we can use our special cases from the first part of the proof, and since 
	 $5(r+2) \ge 5r$, it will be possible to write $5r$ as the sum of $r+2$ numbers, each at most $5$.
 	 \item If $s_0 = r - 4$, we must find $r + 3$ elements summing to $4k - r = 7r$. Take $r$ copies of $6$, 
 	  and then find $3$ elements which are at most $\lfloor \frac{3r}{8} 	\rfloor + 1$ that sum to $r$.  
	  It is easy to check that this is always possible when $r \ge 6$.
   \end{itemize}
   \item[Case 2: $t \ge 2$.] 
   By a previous lemma, 
   $\sum_{j=0}^{t-1}(\lamba_j - 1) \le \lambda_0 - 1 + \frac{r}{\lambda_0}$. But since $t \ge 2$, 
   $\lambda_0 < r$ implies  $2 \le \lambda_0 \le \frac{r}{2}$. By convexity $\lambda_0 - 1 + \frac{r}{\lambda_0}$ 
   is maximized at the extremes $\lambda_0 = 2$ and $\lambda_0 = \frac{r}{2}$. In these cases we have
   $\lambda_0 - 1 + \frac{r}{\lambda_0} = \frac{r}{2} + 1$. If $r \ge 12$, we are done, since $\frac{r}{2} + 1 \le r - 5$. As in the $t = 1$ case, if 
   $k \ge 3r$, we are done. Also as in the $t = 1$ case, we will ignore $r = 7, r = 11$ since they are solved by a 
   previous theorem. So we have 4 subcases:
   \begin{itemize}
   	\item The case $r = 6$. Then $t = 2$ and $\{\lambda_0, \lambda_1\} = \{2,3\}$ in some order. 
	Then the \ssq has at least $11 - 2 - 1 = 8$ elements in it. Since for all $\alpha \le 4$, we have 
	$\chi(\alpha) \equiv \alpha\chi(1) \pmod{r}$, the proof of Lemma~\ref{sl2} can be modified
	to only require a \ssq of length $r + \lceil\frac{r - 1}{3}\rceil = 8$ elements. 
	\item The case $r = 8$. Similarly to the $r = 6$ case, at worst we have $15 - 3 - 2 = 10$ elements in 
	the $\star$-sequence. As before we can modify the proof of Lemma~\ref{sl2} to only require a 
	a length of $r +  \lceil\frac{r - 1}{4}\rceil = 10$ elements.
	\item The case $r = 9$. Similarly to the previous cases, we can check that
	the \ssq has length at least $17 - 2 - 2 = 13 \ge r + 4$, so Lemma~\ref{sl2} applies.
	\item The case $r = 10$. Similarly to the previous cases, we can check that 
	the \ssq has length at least $19 - 4 - 1 = 14 \ge r + 4$, so Lemma~\ref{sl2} applies.
   \end{itemize}
\end{description}

The above cases show that we can always find a valid \ssq with the desired length and sum, which completes the proof that there is a contradiction unless $\chi(k) - k\chi(1) \equiv 0 \pmod{r}$. Of course, we also have
$$(k-2)\cdot 1 + 2 = k,$$
so
$$(k-2)\chi(1) + \chi(2) + \chi(k) \nequiv 0 \pmod{r}.$$
Since $\chi(2) \equiv 2\chi(1) \pmod{r}$, this leads to a contradiction immediately, and the proof is complete.

\end{proof}

\section{Lower Bounds}
Here we present two shorter proofs of lower bounds on $\sz{k}{r}$. Recall the statement of Theorem~\ref{thm:lodd}.

\lodd*

\begin{proof}
We will show how to construct a coloring $\chi : \{1,2,\dots,kr-r-1\} \longrightarrow \mathbb{Z} / r\mathbb{Z}$ with no $r$-zero-sum solution to $\mathcal{E}$.
The coloring will have the following properties for $1 \le \alpha \le r$:
\begin{itemize}
\item For $m \le \alpha k  - \alpha, \chi(m) \in \{m, m + 2, m + 4, \dots, m + 2(\alpha- 1)\}.$ 
\item For $m \ge \alpha k - \alpha, \chi(m) \not\in \{-m, -m - 2, -m - 4, \dots, -m - 2(\alpha - 1) \}.$
\end{itemize}
Suppose there is such a coloring, and that $\sum_{i=1}^{k-1} x_i = x_k$. Suppose the sequence $x_1, \dots, x_{k-1}$ contains $b_\alpha$ elements $x_i$ such that $(\alpha-1)k - (\alpha - 1) < x_i \le \alpha k - \alpha$.
Then $\sum_{i=1}^{k-1} \chi(x_i) \equiv x_k + 2\gamma \pmod{r}$, where $\gamma$ is some integer such that $0 \le \gamma \le \sum_{\alpha=1}^r (\alpha - 1)b_\alpha$.

Furthermore, $x_k \ge k - 1 + \sum_{\alpha=1}^r b_\alpha (\alpha - 1)(k - 1) = \left(1 + \sum_{\alpha=1}^r (\alpha - 1)b_\alpha \right)(k - 1)$. Thus $\chi(x_k) \nequiv -x_k - 2\gamma'$ for any $0 \le \gamma' \le \sum_{\alpha=1}^r (\alpha - 1)b_\alpha$.

Suppose for the sake of contradiction that $\chi(x_k) + x_k + 2\gamma \equiv 0 \pmod{r}$. Then $\chi(x_k) \equiv -x_k - 2\gamma \pmod{r}$. But this is impossible since $\chi(x_k) \nequiv -x_k - 2\gamma'$, and the bounds on $\gamma$ and $\gamma'$ are the same.

Now we will show that such a coloring exists. If $m \ne \alpha k - \alpha$ for some $\alpha$, then the set of allowed colors of $m$ is larger than the set of disallowed colors, so we can simply pick one of the allowed colors. Note that this uses the fact that $\alpha \le r$. 
On the other hand, if $m = \alpha k - \alpha$, the sets of permitted and forbidden colors are the same size. We must show that they are not equal for $\alpha < r$.
Suppose for the sake of contradiction that 
$$\{-\alpha, -\alpha + 2, -\alpha + 4, \dots, -\alpha + 2(\alpha - 1) \} \equiv \{\alpha, \alpha - 2, \alpha - 4, \dots, \alpha - 2(\alpha -1) \} \pmod{r}.$$
Since $\alpha < r$, both of these sets have at most $r$ elements in them, and the elements all differ by $2$'s. So, if the two sets are the same, it must be the case that $-\alpha \equiv \alpha - 2(\alpha - 1) \pmod{r}$. 
But that implies $0 \equiv 2 \pmod{r}$, which is a contradiction. Therefore $\sz{k}{r} > kr - r - 1$.
\end{proof}

There is a similar construction when $r$ is even, but it is no longer useful to use sequences with common difference two.

\leven*

\begin{proof}
Again we will show how to construct a coloring $\chi: \{1, 2, \dots, kr - r - 2 \} \longrightarrow \mathbb{Z}/r\mathbb{Z}$ with no $r$-zero-sum solution to $\mathcal{E}$. The coloring will have the following properties for $1 \le \alpha \le r - 2$:
\begin{itemize}
\item For $m \le \alpha k - \alpha, \chi(m) \in \{m, m + 1, m + 2, \dots, m + (\alpha - 1) \}$.
\item For $m \ge \alpha k - \alpha, \chi(m) \not\in \{-m, -m - 1, -m - 2, \dots, -m - (\alpha - 1) \}$.
\end{itemize}
Additionally, we have the following properties:
\begin{itemize}
\item For $m \le (r-1)k - (r - 1) - 1, \chi(m) \in \{m, m+1, m+2, \dots, m + (r- 2) \}$.
\item For $m \ge (r-1)k - (r-1), \chi(m) \not\in \{-m, -m - 1, -m -2, \dots, -m - (r - 2) \}$.
\end{itemize}
First we show that these properties are sufficient. Suppose for the sake of contradiction that $\chi$ satisfies the above properties and that $x_1, \dots , x_k$ is an $r$-zero-sum solution to $\mathcal{E}$. As before, for $1 \le \alpha \le r - 2$, let $b_\alpha$ be the number of elements of $x_1, \dots, x_{k-1}$ such that $(\alpha - 1)k  - (\alpha - 1) < x_i \le \alpha k - \alpha$. Additionally, let $b_{r-1}$ be the number of $x_i$ such that $(r-2)k - (r-2) < x_i \le (r-1)k - (r-1) - 1$. Note that none of $x_1, \dots, x_{k-1}$ can be be greater than or equal to $(r-1)k - (r-1)$, or $x_k$ would exceed $kr - r - 2$.

Now, $\sum_{i=1}^{k-1}\chi(x_i) \equiv x_k + \gamma$, where $\gamma$ is some integer such that $0 \le \gamma \le \sum_{\alpha=1}^{r-1}(\alpha - 1)b_\alpha$.

Furthermore, $x_k \ge k - 1 + \sum_{\alpha = 1}^{r-1}b_\alpha(\alpha - 1)(k - 1) = (1 + \sum_{\alpha=1}^{r-1} (\alpha - 1)b_\alpha)(k-1)$. Thus $\chi(x_k) \nequiv -x_k - \gamma' \pmod{r}$, where $0 \le \gamma' \le \sum_{\alpha=1}^{r-1}(\alpha - 1)b_\alpha$. Suppose that $\chi(x_k) + x_k + \gamma \equiv 0 \pmod{r}$. Then $\chi(x_k) \equiv -x_k - \gamma \pmod{r}$. But that is impossible since $\chi(x_k) \nequiv -x_k - \gamma'$, and $\gamma'$ has the same bounds as  $\gamma$.

Now we show that such a coloring exists. As in the previous proof, the set of permitted colors is larger than the set of forbidden colors except when $m = \alpha k - \alpha$ for some $\alpha \le r - 2$. Suppose for some such $\alpha$ the sets of permitted and forbidden colors are the same $\pmod{r}$. That is, suppose
$$\{-\alpha, -\alpha + 1, -\alpha + 2, \dots, -\alpha + (\alpha - 1) \} \equiv \{\alpha, \alpha - 1, \alpha - 2, \dots, \alpha - (\alpha - 1)\}. $$
Then $-\alpha \equiv \alpha - (\alpha - 1) \implies \alpha \equiv -1 \pmod{r}$. But that is impossible since $\alpha < r - 1$. Therefore $\sz{k}{r} > kr - r - 2$.
\end{proof}

\section{Conclusion and Remaining Questions}
The above theorems show that $\sz{k}{r} = kr - r$ whenever $r$ is an odd prime and $k > r$. Additionally, we have shown that $\sz{k}{4} = 4k - 5$ when $k > 4$, so Robertson's first two questions have been answered.
Robertson's fourth question, regarding the order of $\sz{k}{k}$ still remains unresolved.
Additionally we ask the following questions:
\begin{itemize}
\item Is the bound given by the summation in Theorem 3 always tight?
\item For most objects in Ramsey Theory, it seems very difficult to find upper and lower bounds which are close to each other. On the other hand, it appears to be much easier to find close bounds on the zero-sum Schur numbers than on the ordinary Schur numbers. Similar zero-sum variants can be defined for many other objects of Ramsey Theory. What more can be said about the zero-sum analogues of\dots
\begin{itemize}
\item Ramsey Numbers?
\item Van der Waerden Numbers? (see~\cite{robertsonvdw})
\item Rado Numbers? (see~\cite{brown})
\end{itemize}
\end{itemize}

\section{Acknowledgements}
This work was conducted at the 2018 Duluth REU. The author would like to thank Joe Gallian for organizing the REU and suggesting the problem, as well the advisors Levent Alpoge, Aaron Berger, and Colin Defant for their help and support. Additionally, the author would like to thank Mitchell Lee for his insightful comments. The Duluth REU is supported by NSF/DMS grant 1650947 and NSAgrant H98230-18-1-0010, and by the University of Minnesota Duluth.


\begin{thebibliography}{}
\bibitem{robertson}
A. Robertson, Zero-sum generalized Schur numbers, {arXiv}:1802.03382, Feb. 2018.

\bibitem{robertson2}
A. Robertson, B. Roy, and S. Sarkar, The Determination of 2-color zero-sum generalized Schur numbers, {arXiv}:1803.00861, Mar. 2018.

\bibitem{robertsonvdw}
A. Robertson, Zero-sum analogues of van der Waerden's Theorem on arithmetic progressions, {arXiv}:1802.03387, Feb. 2018.

\bibitem{brown}
N. Brown, On Zero-Sum Rado Numbers for the Equation $ax_1 + x_2 = x_3$, Masters thesis, South Dakota State University, 2017.

\bibitem{schur}
Schur, I. ``\"{U}ber die Kongruenz $x^m+y^m=z^m \pmod{p}$" Jahresber. \textit{Deutsche Math.-Verein}. \textbf{25}, 114-116, 1916.

\bibitem{exoo}
Exoo, G. ``A Lower Bound for Schur Numbers and Multicolor Ramsey Numbers of $K_3$." \textit{Electronic J. Combinatorics 1}, No. 1, R8, 1-3, 1994. 

\bibitem{egz}
P. Erd\H{o}s, A. Ginzberg, and A. Ziv, Theorem in additive number theory, \textit{Bulletin Research Council Israel} \textbf{10F}
(1961-2), 41-43.

\bibitem{zerosum}
Y. Caro, Zero-sum problems --- a survey, \textit{Discrete Math}. \textbf{152} (1996), 93-113.
\end{thebibliography}
\end{document}